\title{Weak and Strong Extremal Biquadratics}
\author{Grigoriy Blekherman, Bogdan Rai\cb{t}\u{a}, Isabelle Shankar, and Rainer Sinn}
\date{\today}
\pgfplotsset{width=7cm, compat=1.10}
\numberwithin{equation}{section}
\newcommand{\R}{\mathbb{R}}
\newcommand{\Z}{\mathbb{Z}}
\newcommand{\C}{\mathbb{C}}
\newcommand{\ProjP}{\mathbb{P}}
\renewcommand{\P}{\mathbb{P}}
\newcommand{\V}{\mathcal{V}}
\DeclareMathOperator{\pic}{Pic}
\newcommand{\dif}{\mathrm{\;d}}
\newcommand{\wh}{\widehat}
\newcommand{\ratto}{\dashrightarrow}
\newtheorem{theorem}{Theorem}[section]
\newtheorem{cor}[theorem]{Corollary}
\newtheorem{lemma}[theorem]{Lemma}
\newtheorem{prop}[theorem]{Proposition}
\theoremstyle{definition}
\newtheorem{definition}[theorem]{Definition}
\newtheorem{conjecture}[theorem]{Conjecture}
\theoremstyle{remark}
\newcommand{\qnm}{\mathrm{Q}_{n,m}}
\newcommand{\qn}{\mathrm{Q}_{n}}
\newcommand{\bnm}{\mathrm{B}_{n,m}}
\newcommand{\bn}{\mathrm{B}_{n}}
\newcommand{\qqnm}{\mathrm{QQ}_{n,m}}
\newcommand{\qqn}{\mathrm{QQ}_{n}}
\newcommand{\pbnm}{\mathrm{PB}_{n,m}}
\newcommand{\pbn}{\mathrm{PB}_{n}}
\newcommand{\CC}{\mathbf{C}}
\newcommand{\TT}{\mathbf{T}}
\newcommand{\SSS}{\mathbf{S}}
\begin{document}

\maketitle
\begin{abstract}
\noindent 
We study quasiconvex quadratic forms on $n \times m$ matrices which correspond to nonnegative biquadratic forms in $(n,m)$ variables. We disprove a conjecture stated by Harutyunyan--Milton (Comm. Pure Appl. Math. 70(11), 2017) as well as Harutyunyan--Hovsepyan (Arch. Ration. Mech. Anal. 244, 2022) that extremality in the cone of quasiconvex quadratic forms on $3\times 3$ matrices can follow only from the extremality of the determinant of its acoustic tensor, using previous work  by Buckley--\v{S}ivic (Linear Algebra Appl. 598, 2020).
Our main result is to establish a conjecture of Harutyunyan--Milton (Comm. Pure Appl. Math. 70(11), 2017) that weak extremal quasiconvex quadratics on $3 \times 3$ matrices are strong extremal.  
Our main technical ingredient is a generalization of the work of Kunert--Scheiderer on extreme nonnegative ternary sextics (Trans. Amer. Math. Soc. 370(6), 2018).  Specifically, we show that a nonnegative ternary sextic, which is not a square, is extremal if and only if its  variety (over the complex numbers) is a rational curve and all its singularities are real.
\end{abstract}

\section{Introduction}
Composite materials have been studied mathematically since at least the first half of the nineteenth century, receiving historical contributions from Poisson, Faraday, Rayleigh, Maxwell, and Einstein. Basic ideas describing effective properties of composite materials were introduced in the early twentieth century by Voigt \cite{Voigt} and Reu{\ss} \cite{Reuss}, and were subsequently refined by Hill \cite{hill1952elastic} and Hashin--Strikman \cite{hashin1962variational,hashin1963variational}. These works of resounding impact in the Material Sciences literature were subsequently embedded in the Analysis of Partial Differential Equations. Here we recall the work from the late twentieth century of Murat and Tartar \cite{MuratTartar,tartar1979,tartar1985}, Ball \cite{ball77}, Lurie--Cherkaev \cite{lurie1984exact}, Francfort--Murat \cite{francfort1986homogenization}, Kohn \cite{Kohn}, Allaire--Kohn \cite{allaire1993optimal,allaire1994optimal}, and Milton \cite{MiltonCPAM}, to name a few. The fundamental mathematical theory developed around the practical questions in material science is comprehensively summarized in the modern books by Milton \cite{MiltonBook} and Grabovsky \cite{GrabovskyBook}.

To conclude this brief historical discussion, we zoom in on the remarkable observation of Milton \cite{MiltonCPAM} that a special class of \emph{extremal quasiconvex translations}, which we will refer to as \emph{weak extremals} in the remainder of the text, can be used to retrieve and perhaps improve the celebrated Hashin--Strikman bounds \cite{hashin1962variational,hashin1963variational} by the method of Murat--Tartar \cite{MuratTartar} and Lurie--Cherkaev \cite{lurie1984exact} (see Appendix \ref{sec:composites} for an exposition of the origin of Milton extremals). Of the advances of the twenty first century in this direction, we highlight the fact that weak extremals have a Krein--Milman type property: namely, any quasiconvex quadratic form can be decomposed as a sum of a weak extremal and a convex form. While the relevance of quasiconvex quadratic forms for variational problems in Continuum Mechanics was recognized a long time ago \cite{morrey52,vanhovezbMATH03046021}, their separation from convex forms is not very well understood. One useful viewpoint is to identify the \emph{quasiconvex quadratic forms} with \emph{nonnegative biquadratic forms} by quotienting out null Lagrangians (see Section \ref{sec:prel} for definitions and details). Under this identification, weak extremals are nonnegative biquadratic forms that are completely separated from \emph{sums of squares}, making the tools and advances of Real Algebraic Geometry available for this study \cite{zbMATH07102076,GGMzbMATH06572970,BS2020,KS2018,Quarez2015} (see \cite{convex_algebraic_geometry,real_alg_geom_BCR} for general background in Real Algebraic Geometry). 

The results of Harutyunyan--Milton \cite{HMcalcvar,HMarma,HarutyunyanMiltonCPAM} lead to the declared goal:
\begin{quote}
    Classify all $3\times 3$ weak extremal quasiconvex quadratic forms.
\end{quote}
A clarification of the meaning of extremality is in order. In light of the Krein--Milman property for weak extremals \cite[Sec. 6]{MiltonKM}, it seems natural to make the comparison with the classical notion of extreme rays from Convex Analysis. Indeed, an extreme ray of the cone of nonnegative biquadratics is also a weak extremal, except for the trivial case of a perfect square. The opposite question, whether weak extremals are extreme rays, is far from obvious and was asked in \cite{HarutyunyanMiltonCPAM}. 

We now make the connection between this question and Real Algebraic Geometry precise.  Let $\pbnm$ denote the proper convex cone of biquadratic forms in $(n,m)$ variables. Contained inside $\pbnm$ is the smaller cone, $\Sigma_{n,m}$, of biquadratic sums of squares.  A strong extremal is just an extreme ray of $\pbnm$. That is, $f$ is a strong extremal of $\pbnm$ if whenever we subtract a nonnegative biquadratic (which is not a scalar multiple of $f$) from $f$ we leave the cone $\pbnm$. Likewise, a nonnegative biquadratic is weak extremal if subtracting a form in $\Sigma_{n,m}$ moves it outside $\pbnm$, a slightly weaker notion of extremality.  For a precise formulation we refer the reader to Definitions \ref{def:SE}, \ref{def:WE}.

We answer in the positive a question of Harutyunian--Milton \cite[Conj. 2.8]{HarutyunyanMiltonCPAM},  \cite[Conj. 2.7]{harutyunyan2021extreme}:
\begin{theorem}\label{thm:mintro}
Weak extremal quasiconvex quadratic forms on $\R^{3\times 3}$ are strong extremals.
\end{theorem}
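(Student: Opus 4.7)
The plan is to translate the statement about $3\times 3$ biquadratic forms into a question about ternary sextics via the acoustic tensor, and then apply the classification of extremal nonnegative ternary sextics stated in the abstract. Given a nonnegative biquadratic $f(x,y)$ in $(3,3)$ variables, write $f(x,y) = x^\top M_f(y)\,x$, where $M_f(y)$ is a symmetric $3\times 3$ matrix whose entries are quadratic forms in $y$. Nonnegativity of $f$ is equivalent to $M_f(y)\succeq 0$ for all $y\in\R^3$, and the \emph{sextic discriminant} $\Delta_f(y):=\det M_f(y)$ is then a nonnegative ternary sextic. Moreover, $\Delta_f$ is a perfect square of a cubic precisely when $M_f$ admits a factorization $M_f(y)=B(y)B(y)^\top$ with $B$ of linear entries, which is equivalent to $f\in\Sigma_{3,3}$. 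This correspondence is the bridge by which the generalized Kunert--Scheiderer classification can be applied.

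I would first dispose of the perfect-square case: if $f$ is a perfect square it is trivially strong extremal, so we may assume $f$ is not a perfect square. A weak extremal $f$ that is not a perfect square must then lie outside $\Sigma_{3,3}$, and thus $\Delta_f$ is a nonnegative ternary sextic that is not a square. The key step is to promote weak extremality of $f$ to extremality of $\Delta_f$ in the cone of nonnegative ternary sextics. Invoking the generalized Kunert--Scheiderer theorem, the complex variety of $\Delta_f$ is then a rational curve whose singularities are all real. Combined with the observation that a real singularity of $\Delta_f$ of sufficient multiplicity forces $M_f$ to drop rank, this yields tight control on the real geometry of the matrix pencil $M_f(\cdot)$.

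For the reverse direction, suppose toward a contradiction that $f$ is weak extremal but not strong extremal, so $f=g_1+g_2$ with $g_i\in\pbn$ non-proportional to $f$. Pointwise matrix inequalities then give $0\preceq M_{g_i}(y)\preceq M_f(y)$. Along a generic smooth point of the zero curve of $\Delta_f$ the matrix $M_f(y)$ has rank exactly $2$, while at each real singular point $M_f$ drops to rank at most $1$ and forces both $M_{g_i}$ to be rank at most $1$ there as well. The rational parametrization of the variety produces polynomial sections $v(y)$ spanning $\ker M_f(y)$ along the curve; writing $M_{g_i}(y)$ against this parametrization identifies the rank-$1$ part of one $g_i$ with an explicit bilinear square $L(x,y)^2$ satisfying $L^2\leq f$ pointwise. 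The existence of such a nonzero SOS biquadratic below $f$, not proportional to $f$, contradicts weak extremality.

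The hard part will be the bootstrap from weak extremality of $f$ to extremality of $\Delta_f$, since the map $f\mapsto\Delta_f$ is cubic rather than linear, so a decomposition of $\Delta_f$ does not directly come from a decomposition of $f$. I expect one must combine two ingredients: a Minkowski-type matrix inequality showing that any decomposition $\Delta_f=\Delta_1+\Delta_2$ of nonnegative sextics bounded above by $\Delta_f$ lifts to a PSD matrix-polynomial decomposition $M_f=M_1+M_2$; and a dimension/degree count ruling out nontrivial such lifts unless $f$ itself splits off a nontrivial SOS summand. This interplay between the algebraic geometry of the sextic (rationality and real singularities) and the matrix-valued analysis of the acoustic tensor is where the core technical work should live, with the generalized Kunert--Scheiderer classification providing the geometric input that closes the argument.
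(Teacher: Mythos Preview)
Your central step---promoting weak extremality of the biquadratic (your $f$) to extremality of $\Delta_f=\det M_f$ in the cone of nonnegative ternary sextics---is false, and the paper itself disproves it. Theorem~\ref{thm:main-ext_not_ext_det} (Section~\ref{sec:counterexample}) exhibits a strong, hence weak, extremal $F\in\mathrm{PB}_3$ whose acoustic-tensor determinant has exactly nine real zeros, each an ordinary $A_1$ node, and no tenth zero; by the Kunert--Scheiderer classification that sextic is \emph{not} extremal. So no ``Minkowski-type lifting'' of a sextic decomposition $\Delta_f=\Delta_1+\Delta_2$ to a PSD matrix decomposition $M_f=M_1+M_2$ can hold in general, and the bootstrap you identify as ``the hard part'' simply does not go through. (A side remark: your claimed equivalence ``$\Delta_f$ is a perfect square $\Leftrightarrow f\in\Sigma_{3,3}$'' is also not correct in the direction you need; an SOS biquadratic may require more than three bilinear squares, so $M_f$ need not factor as $B(y)B(y)^\top$ with $B$ square of linear entries.)

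The paper avoids $\Delta_F$ altogether. Assuming $F$ is weak but not strong extremal, one writes $F=H_1+H_2$ with $H_i$ linearly independent and on the boundary of the minimal face through $F$ (Proposition~\ref{prop:WE_relint_of_face}); each $H_i$ therefore acquires an \emph{additional} real zero or a higher-order singularity beyond those of $F$. It is the determinants $h_i=\det T_i(x)$ of the \emph{pieces} that are analyzed. Lemma~\ref{lem:roots} transfers the Hessian-kernel data of $H_i$ at its zeros down to singularities of $h_i$, and together with the weak-extremality count $\dim L_F=9$ (Theorem~\ref{thm:char_we}) this forces the $\delta$-invariants of $\V(h_i)$ to sum to $10$, i.e.\ each $\V(h_i)$ is rational with all singularities real. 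By Theorem~\ref{thm:exteme_ternary_sextic_conditions} both $h_1$ and $h_2$ are then extremal non-SOS sextics lying in the same pencil (Lemma~\ref{lem:resolutionssing}), and the uniqueness of the non-square extremal in that pencil gives the contradiction. The point is that $\det T_F$ generically has $\delta$-sum only $9$ and is not extremal, whereas the boundary forms $H_i$ pick up exactly the extra singularity needed.
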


To ensure better access to our work to readers specialized in either Partial Differential Equations or Algebraic Geometry, we also state the equivalent formulation of our main result:
\begin{theorem}\label{thm:mintro2}
A weak extremal nonnegative biquadratic on $\R\P^2\times \R\P^2$ (which is the same as the restriction of a quadratic form to the set of $3\times 3$ real matrices of rank $1$) spans an extreme ray of the convex cone of nonnegative quadratics (in $(3,3)$ variables).
\end{theorem}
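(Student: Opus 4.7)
The plan is to analyze $f$ through its associated acoustic tensor, apply the generalized Kunert--Scheiderer theorem announced in the abstract, and lift the resulting algebro-geometric rigidity back to the biquadratic level.

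\textbf{Setup and Krein--Milman reduction.} Write $f(x,y) = x^\top A_f(y)\, x$, where $A_f(y)$ is the $3\times 3$ symmetric matrix of quadratic forms in $y$ (the acoustic tensor); nonnegativity of $f$ is equivalent to $A_f(y)\succeq 0$ for every $y\in\R^3$, and the discriminant $p_f(y):=\det A_f(y)$ is a nonnegative ternary sextic. The trivial cases where $f$ is a single bilinear square (so $A_f$ has rank one) or where $p_f\equiv 0$ (so $A_f$ has generic rank at most two) are handled separately, so assume $p_f\not\equiv 0$ and $f$ is not a square. Suppose for contradiction that $f$ is weak extremal but $f = g_1+g_2$ with $g_i\in\mathrm{PB}_{3,3}$ not proportional to $f$. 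By the Krein--Milman property for weak extremals, decompose $g_i = w_i + s_i$ with $w_i$ weak extremal and $s_i\in\Sigma_{3,3}$; summing, $f=(w_1+w_2)+(s_1+s_2)$, and weak extremality of $f$ combined with $f\notin\Sigma_{3,3}$ forces $s_1=s_2=0$. Hence we may assume $f=w_1+w_2$ with both $w_i$ weak extremals not proportional to $f$, and $A_f=A_{w_1}+A_{w_2}$ is a sum of PSD matrix-valued quadratic forms with $A_{w_i}\preceq A_f$.

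\textbf{Applying the sextic theorem.} The Minkowski determinant inequality $\det(A+B)\geq \det A+\det B$ for PSD $3\times 3$ matrices gives $p_f\geq p_{w_1}+p_{w_2}$ in $\mathrm{Q}_3$, so $p_f = p_{w_1} + p_{w_2} + r$ with $r\in\mathrm{Q}_3$. Apply our generalized Kunert--Scheiderer theorem to $p_f$: either $p_f$ is extremal in $\mathrm{Q}_3$, so $V_\C(p_f)$ is a rational curve with only real singularities; or $p_f$ admits a nontrivial decomposition in $\mathrm{Q}_3$. In the extremal case, $p_{w_1}, p_{w_2}, r$ are each scalar multiples of $p_f$, and I would use the rational parameterization of $V_\C(p_f)$ together with the reality of its singularities to globally coordinatize the kernel line bundle of $A_f$ along the curve; this rigidity, combined with the constraints $A_{w_i}\preceq A_f$ and $\det A_{w_i}=\lambda_i \det A_f$, should force $A_{w_i}=\lambda_i A_f$ and hence $w_i\propto f$, contradicting non-proportionality. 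In the non-extremal case, the nontrivial sextic decomposition of $p_f$, combined with the Buckley--\v{S}ivic analysis of PSD determinantal representations, should lift to a biquadratic decomposition that exhibits a nonzero SOS summand of $f$ not proportional to $f$, contradicting weak extremality.

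\textbf{Main obstacle.} The principal difficulty is the lifting step between sextic-level and biquadratic-level decompositions. A determinantal representation $p = \det A$ as a PSD matrix of quadratic forms depends delicately on a theta characteristic on $V_\C(p)$, the configuration of its singularities, and compatibility with the real structure, so that a sextic decomposition $p_f=q_1+q_2$ in $\mathrm{Q}_3$ does not automatically yield a matrix decomposition $A_f=A_1+A_2$ with $\det A_i=q_i$ and $A_i\succeq 0$. Resolving this requires explicit control of the kernel bundle of $A_f$ along $V_\C(p_f)$, using rationality of the curve (in the extremal case) or the Buckley--\v{S}ivic structural analysis (in the non-extremal case), together with a careful passage through the real structure to preserve positive semidefiniteness. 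Tying these algebraic constructions back to the real-positivity requirement on each summand is the subtlest point and the place where the generalized Kunert--Scheiderer criterion is indispensable.
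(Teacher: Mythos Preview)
Your proposal is a strategy sketch rather than a proof, and it has real gaps that the paper's argument avoids.

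\textbf{The non-extremal case is essential and unresolved.} You split according to whether $p_f=\det A_f$ is extremal in the cone of nonnegative ternary sextics. But the paper's Theorem~\ref{thm:main-ext_not_ext_det} (the counterexample of Section~\ref{sec:counterexample}) exhibits a weak (even strong) extremal $f$ for which $\det A_f$ is \emph{not} extremal. So the ``non-extremal'' branch of your dichotomy is not a side case; it occurs, and you yourself flag its resolution as the main obstacle. The suggested lifting of a sextic decomposition $p_f=q_1+q_2$ to a PSD matrix decomposition $A_f=A_1+A_2$ with $\det A_i=q_i$ has no mechanism offered, and Buckley--\v{S}ivic does not supply one (the paper uses their work only to build the counterexample, not for the main theorem).

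\textbf{The extremal case is also incomplete.} Even when $p_f$ is extremal, from $p_{w_i}=\lambda_i p_f$ and $A_{w_i}\preceq A_f$ it does not follow that $A_{w_i}=\lambda_i A_f$: equality of determinants of PSD matrix pencils does not force proportionality of the pencils. Your appeal to ``rigidity of the kernel line bundle'' is a heuristic, not an argument.

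\textbf{What the paper does instead.} The paper never analyzes $\det A_f$. It first characterizes weak extremals intrinsically (Theorem~\ref{thm:char_we}): $f$ is weak extremal iff the span $L_f$ of the tangent data at its real zeros is all of $\R^{3\times 3}$. Given a nontrivial splitting $f=H_1+H_2$ (with neither $H_i$ a square, via Proposition~\ref{prop:WE_relint_of_face}), Lemma~\ref{lem:roots} transfers this full Hessian-kernel information from $f$ down to each projected sextic $h_i=\det T_i(x)$, showing that \emph{both} $h_1$ and $h_2$ satisfy the hypotheses of the generalized Kunert--Scheiderer criterion (Theorem~\ref{thm:exteme_ternary_sextic_conditions}). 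Hence $h_1$ and $h_2$ are \emph{both} extremal non-SOS sextics lying in the same pencil determined by the common singularities inherited from $f$. But that pencil contains exactly one such extremal sextic (Lemma~\ref{lem:resolutionssing} plus the argument in Theorem~\ref{thm:exteme_ternary_sextic_conditions}), forcing $h_1=h_2$ up to scale and yielding the contradiction. The key move you are missing is to apply the sextic extremality criterion to the \emph{summands} $H_i$, not to $f$, and to use the $L_f$-characterization (together with Lemma~\ref{lem:roots}) to guarantee enough real singularities on both $h_i$.
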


We show below that this means that every face of the cone of nonnegative biquadratics in $(3,3)$-variables of dimension at least $2$ contains a square. We recast Theorems \ref{thm:mintro} and \ref{thm:mintro2} as Corollary \ref{cor:main} and Theorem \ref{thm:main}. The latter is proved in Section \ref{sec:weak-strong}, using results from Section \ref{sec:ternary_sextics}.

In Section \ref{sec:counterexample}, we disprove a conjecture of Harutyunyan--Milton \cite[Conj. 2.8]{HarutyunyanMiltonCPAM} and Harutyunyan--Hovsepyan \cite{harutyunyan2021extreme}, 
that extremality in the cone of quasiconvex quadratic forms on $3\times 3$ matrices can follow only from the extremality of the determinant of its acoustic tensor. Our example is based on the work of Buckley--\v{S}ivic \cite{BS2020}, using results of Kunert--Scheiderer \cite{KS2018}.

\begin{theorem}\label{thm:main-ext_not_ext_det}
There exists a (weak) extremal quasiconvex quadratic form on $\R^{3\times 3}$ such that the determinant of its acoustic tensor is not an extreme ray in the cone of nonnegative ternary sextics.
\end{theorem}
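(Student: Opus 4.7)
The plan is to exhibit an explicit biquadratic form $f \in \mathrm{PB}_{3,3}$ that is extremal but whose acoustic-tensor determinant is not extremal in the cone of nonnegative ternary sextics. The natural source for such an $f$ is the work of Buckley--\v{S}ivic \cite{BS2020}, who constructed extremal nonnegative biquadratic forms on $\R^3 \times \R^3$ that do not arise from the classical ``determinant of an acoustic tensor'' construction. First I would take one such explicit $f(x,y) = y^\top A(x) y$, invoke (or re-check) their argument that $f$ spans an extreme ray of $\mathrm{PB}_{3,3}$, and note that since $f$ is not a square of a bilinear form, strong extremality automatically yields weak extremality (because $\Sigma_{3,3} \subseteq \mathrm{PB}_{3,3}$).

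Next I would analyze the ternary sextic $d(x) := \det A(x)$. By the characterization of extremal nonnegative ternary sextics proved in Section \ref{sec:ternary_sextics} (generalizing Kunert--Scheiderer \cite{KS2018}), a non-square nonnegative ternary sextic is extremal if and only if its complex vanishing locus is a rational curve with all singularities real. It therefore suffices to exhibit a failure of one of these geometric conditions for $d$. In practice I expect $d$ to factor nontrivially as a product of two nonnegative lower-degree forms---say a product of two nonnegative cubics, or a nonnegative quadratic times a nonnegative quartic---which immediately places $d$ on a positive-dimensional face of the sextic cone and destroys extremality.

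The main technical obstacle is producing the factorization (or otherwise witnessing non-extremality) from the concrete Buckley--\v{S}ivic example. The shortcut I would try first is to pick an $f$ whose acoustic tensor $A(x)$ is engineered to drop rank along a curve in $\P^2$: any hypersurface on which $\operatorname{rank} A(x) \leq 2$ forces its defining polynomial to divide $\det A(x)$, automatically producing the desired nontrivial factorization. If no such structure is available ``for free'' in their list, the fall-back is a direct symbolic computation of $d$, followed by a factorization check and, if needed, an analysis of the singular locus of $V(d) \subset \P^2_{\C}$ to verify either positive geometric genus or the presence of non-real singularities, thereby contradicting the Kunert--Scheiderer criterion and completing the disproof.
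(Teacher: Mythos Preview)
Your overall plan---pick a Buckley--\v{S}ivic extremal biquadratic and show that $\det T(x)$ is not extremal---is exactly what the paper does. The discrepancy is in the mechanism you anticipate for non-extremality.

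You expect $d=\det T(x)$ to \emph{factor} or, equivalently, $T(x)$ to drop rank along a curve. For the Buckley--\v{S}ivic forms this does not happen: in the paper's example (the family $f_{p,q}$ with $p=\tfrac12$, $q=\tfrac34$), $\det T(x)$ has exactly nine isolated real zeros, each an $A_1$-singularity (Hessian of rank~$2$). So $d$ is irreducible with only nodal singularities, and your ``shortcut'' and primary expectation would both fail. Indeed, a nontrivial factorization or a one-dimensional real zero set would force $d$ to be a sum of squares by Lemma~\ref{lem:ternary6SOS}, which is not the relevant obstruction here.

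Your fallback---check the Kunert--Scheiderer criterion via the singular locus---is the correct route and is precisely what the paper uses. With nine $A_1$-singularities the $\delta$-invariants sum to $9$, so $\V(d)$ has geometric genus $1$ and is not rational; equivalently, by Theorem~\ref{theorem:ternary_sextic_nine_zeros}, a non-SOS nonnegative sextic with exactly nine rank-$2$ zeros is extremal only if it acquires a tenth real zero (or one zero degenerates to $A_3$), which $\det T(x)$ does not. That is the actual reason $d$ fails to be extremal.
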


The \emph{acoustic tensor} (or \emph{$y$-matrix}) of a quadratic form $\Phi$ on $\R^{3\times 3}$ is the matrix $T(y)$ such that $\Phi(x\otimes y)=\langle x,T(y)x\rangle$. Equivalently, if $F$ is a biquadratic form then $F$ can be written as $x^TT(y)x$ where $T(y)$ is a matrix of quadratic forms in the variables $y$.

We can view our main results in a complementary way. Theorem \ref{thm:mintro} states that weak extremals in the physically relevant case of three dimensions are simply strong extremals, and thus do not form a larger class. On the other hand, Theorem \ref{thm:main-ext_not_ext_det} shows that weak extremals do form a larger class than forms whose extremality can be deduced from extremality of their acoustic tensor. 

We alluded to the fact that dimension three is special for our study, a fact that deserves more attention. Indeed, in dimension two, nonnegative biquadratics are sums of squares \cite{calderon,terpstra}, so there are no weak extremals in this case. That positive biquadratics on $\R^3\times \R^3$ which are not sums of squares exist was already known to Terpstra \cite{terpstra} and explicit examples can be found in both the Algebraic Geometry literature \cite{BS2020,Choi,choilamzbMATH03540962} and the Analysis literature \cite{HMcalcvar,serre1,serre2}. Among these, explicit examples of nontrivial extremals were presented in \cite{BS2020,choilamzbMATH03540962,HMcalcvar}. 
Finally, dimension three is special also because in dimension four or higher there are weak extremals that are not strong, as was recently proved by Harutyunyan--Hovsepyan \cite[Rk. 2.6]{harutyunyan2021extreme}. Therefore our main result does not hold in higher dimensions.

\medskip
\textbf{Acknowledgements.} We would like to thank Kristian Ranestad for very helpful discussions. We also thank Felix Otto and Bernd Sturmfels for pointing out the work on the relation between the Theory of Composites and Algebraic Geometry. We thank Davit Harutyunian for interesting discussions at an early stage of this project. GB is partially supported by US National Science Foundation grant  DMS-1901950. 

\section{Quasiconvexity and biquadratics}\label{sec:prel}

Motivated by examples in Continuum Mechanics (see \cite{GrabovskyBook,MiltonBook} and Appendix \ref{sec:composites}), we investigate quadratic forms defined on matrices
$$
\qnm\coloneqq \{\Phi\colon\R^{n\times m}\rightarrow{\R}\colon \Phi\text{ quadratic form}\}.
$$
These can be written as $\Phi(Z)=\langle Z, CZ\rangle$, where $C$ is a symmetric linear map on $\R^{n\times m}$ and we use the standard inner product $\langle A,B \rangle = \mathrm{trace}(AB^T)$ on matrices. This section touches on \cite[Sec. 5.3.2]{Dacorogna}, where other related facts and more references can be found. 

By the Spectral Theorem, we have the following:
\begin{lemma}\label{lem:convex}
Let $\Phi\in\qnm$. The following are equivalent:
\begin{enumerate}
    \item $\Phi(Z)\geq 0$ for all $Z\in\R^{n\times m}$;
    \item $\Phi$ is convex;
    \item $\Phi$ is a sum of squares, i.e., $\Phi(Z)=\sum_{i=1}^{nm}\langle Z, M_i\rangle^2$  for some matrices $M_i\in\R^{n\times m}$.
\end{enumerate}
\end{lemma}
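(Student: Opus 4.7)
The plan is to reduce everything to the classical spectral theorem for symmetric matrices. Vectorize $\R^{n\times m}$ via an isometry $\R^{n\times m}\cong \R^{nm}$ that sends the Frobenius inner product to the standard inner product on $\R^{nm}$ (pick any orthonormal basis $E_1,\dots,E_{nm}$ of $\R^{n\times m}$). Under this identification, $\Phi(Z)=\langle Z,CZ\rangle$ becomes a quadratic form $z^\top \widetilde C z$ on $\R^{nm}$, where $\widetilde C$ is the symmetric matrix representing $C$ in the chosen basis.

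Apply the spectral theorem to diagonalize $\widetilde C = \sum_{i=1}^{nm}\lambda_i v_i v_i^\top$ with eigenvalues $\lambda_i\in\R$ and orthonormal eigenvectors $v_i$. Let $M_i\in\R^{n\times m}$ be the matrix corresponding to $v_i$ under the isometry; by construction $\langle Z,M_i\rangle$ equals the $i$-th coordinate of $z$ in the eigenbasis, so
\[
\Phi(Z) \;=\; \sum_{i=1}^{nm}\lambda_i\,\langle Z,M_i\rangle^2.
\]

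From this diagonal representation all three equivalences follow immediately. For (1)$\Rightarrow$(3): if $\Phi\geq 0$, then evaluating at $Z=M_j$ gives $\lambda_j\geq 0$ for each $j$, so absorbing $\sqrt{\lambda_i}$ into $M_i$ yields the desired sum-of-squares representation. The implication (3)$\Rightarrow$(1) is obvious since each term is a square. For (1)$\Leftrightarrow$(2): the Hessian of $\Phi$ is constantly $2C$, so $\Phi$ is convex iff $C$ is positive semidefinite iff $\widetilde C$ has nonnegative eigenvalues iff $\Phi\geq 0$. Since this argument is essentially a direct application of the spectral theorem, there is no genuine obstacle; the only point worth being careful about is that the isometric identification $\R^{n\times m}\cong \R^{nm}$ preserves the inner products used to write $\Phi(Z)=\langle Z,CZ\rangle$ and $\Phi(Z)=\sum_i\langle Z,M_i\rangle^2$, which is why the eigenvectors of $\widetilde C$ produce genuine matrices $M_i$ and the resulting expression is coordinate-free.
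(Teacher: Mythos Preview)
Your proof is correct and takes essentially the same approach as the paper: the paper simply invokes the Spectral Theorem without further detail, and your argument is exactly the standard spelling-out of that invocation via an isometric identification $\R^{n\times m}\cong\R^{nm}$ and diagonalization of the symmetric matrix representing $C$.
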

In fact, for this result, we do not use the special structure of the linear space of matrices. However, convex functions seldom appear in applications; a broader class is that of \textbf{polyconvex} functions \cite{ball77,morrey52}, which are defined as convex functions of the minors, i.e.,
$$
\Phi(Z)=g(\mathbf{M}(Z)),
$$
where $g$ is convex and $\mathbf M(Z)$ is the vector of all minors of $Z$. 
For example, $2\times 2$ minors (and linear combinations thereof) are polyconvex functions in $\qnm$. In fact, polyconvex quadratic forms are easily characterized:
\begin{prop}[\cite{vanhovezbMATH03046021}]
\label{prop:polyconvex}
A form $\Phi\in\qnm$ is polyconvex if and only if it can be written as the sum of a convex form plus a linear combination of $2\times 2$ minors, i.e.,
$$\Phi(Z)=\sum_{i=1}^{nm}\langle Z, M_i\rangle^2+\sum_{\deg \mathcal M=2}c_{\mathcal M} \mathcal M(Z),$$
where $c_\mathcal{M}\in \R$ and the sum runs over the $2\times 2$ minors $\mathcal M$ of $Z$.
\end{prop}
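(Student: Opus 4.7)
My plan is to prove the two directions separately. For the direction ``$\Leftarrow$'', given $\Phi(Z) = \sum_i \langle Z, M_i\rangle^2 + \sum_{\mathcal M} c_{\mathcal M}\mathcal M(Z)$, I define $g$ on the space of minor vectors by
$$g(Y_1,Y_2,Y_3,\ldots) = \sum_i \langle Y_1, M_i\rangle^2 + \sum_{\mathcal M} c_{\mathcal M}(Y_2)_{\mathcal M},$$
where $Y_1 \in \R^{n\times m}$ records the $1\times 1$ minors and $Y_2$ the $2\times 2$ minors. This $g$ is convex (nonnegative quadratic in $Y_1$, linear in $Y_2$, constant in the higher blocks) and satisfies $g(\mathbf{M}(Z)) = \Phi(Z)$, so $\Phi$ is polyconvex.

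For the converse, suppose $\Phi = g\circ\mathbf{M}$ with $g$ finite and convex on the minor space. Since $\Phi(0)=0$ we have $g(0) = 0$, and standard convex analysis delivers a subgradient $p \in \partial g(0)$. Decomposing $p$ and $\mathbf{M}(Z)$ by minor degree, the subgradient inequality reads
$$\Phi(Z) \ge L(Z) + Q(Z) + \sum_{k \ge 3} R_k(Z) \qquad \forall\, Z \in \R^{n\times m},$$
where $L$ is linear in $Z$, $Q$ is a linear combination of $2\times 2$ minors, and each $R_k$ is a linear combination of $k\times k$ minors (hence homogeneous of degree $k$ in $Z$). The key step exploits the fact that $\Phi$ is homogeneous of degree $2$: substituting $tZ$ and dividing by $t^2$ for $t \neq 0$ yields $\Phi(Z) \ge L(Z)/t + Q(Z) + \sum_{k\ge 3} t^{k-2} R_k(Z)$. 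Sending $t \to 0^\pm$ forces $L \equiv 0$, and letting $|t| \to \infty$ forces the polynomial $\sum_{k\ge 3} t^{k-2} R_k(Z)$ in $t$ to be bounded above by $\Phi(Z) - Q(Z)$, hence of even degree with nonpositive leading coefficient (or identically zero). Thus the globally largest $k$ with $R_k \not\equiv 0$, if any exists, must be even and must satisfy $R_k(Z) \le 0$ for every $Z$.

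I would rule this out by the algebraic observation that a nontrivial linear combination $R = \sum c_i\mathcal{M}_i$ of $k\times k$ minors is always indefinite: pick $i_0$ with $c_{i_0}\neq 0$ and let $Z$ be supported on the rows and columns of $\mathcal M_{i_0}$, equal to $I_k$ there; every other $k\times k$ submatrix of $Z$ then has a zero row or column, so $R(Z) = c_{i_0}$, while flipping one diagonal entry sends $R(Z)$ to $-c_{i_0}$. Consequently $R_k \equiv 0$ for all $k \ge 3$, the inequality collapses to $\Phi - Q \ge 0$, and Lemma~\ref{lem:convex} promotes $\Phi - Q$ to a sum of squares, yielding the decomposition $\Phi = (\Phi - Q) + Q$. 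The main obstacle I expect is exactly this last elimination: the subgradient inequality delivers only a one-sided pointwise bound on $R_k$, and promoting it to identical vanishing genuinely requires the algebraic indefiniteness of higher-degree minor null Lagrangians, a fact lying outside pure convex analysis that has to be verified by hand via the concrete sign-flipping construction.
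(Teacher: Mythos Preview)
The paper does not include a proof of this proposition; it is attributed to van Hove and stated without argument (the equivalent reformulation in Lemma~\ref{lem:char_polyconvex} is likewise stated without proof). There is therefore no proof in the paper to compare your attempt against.

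Your argument is correct and is essentially the classical one. The only point that deserves a word of justification is the existence of the subgradient $p\in\partial g(0)$: this needs $0$ to lie in the interior of the effective domain of $g$, which is not automatic from the bare definition ``$\Phi = g\circ\mathbf{M}$ with $g$ convex''. It is, however, standard (see e.g.\ Dacorogna, cited in the paper) that a finite-valued polyconvex function admits a representation with $g$ finite and convex on the whole minor space, so $\partial g(0)\neq\emptyset$. Once that is secured, the scaling argument that kills $L$ and forces the top-degree part $R_K$ to be of one sign is correct, and your sign-flip construction showing that any nontrivial linear combination of $k\times k$ minors is indefinite (place $I_k$ on the block of a minor with nonzero coefficient, then negate one diagonal entry) cleanly rules out all $R_k$ with $k\ge 3$. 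The final appeal to Lemma~\ref{lem:convex} then gives the desired decomposition.
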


Moreover, as far as variational problems are concerned, the class of interest is smaller, {as we will explain below}. This is the class of so-called \textbf{quasiconvex} functions, which are defined by the minimization property that
$$
0\leq \int_{\R^m} \Phi(Z+\nabla \varphi(x))-\Phi(Z)\mathrm{\;d} x
$$
for all $Z\in\R^{n\times m}$ and all test functions $\varphi\in C_c^\infty(\R^m,\R^n)$. This is to say that among all functions with affine ``boundary conditions'' $u(x)= Zx$ outside a bounded set $B$, 
this affine deformation itself is a minimizer of the energy $\int_B \Phi(\nabla u(x))\mathrm{\;d}x$.

While quasiconvexity is very difficult to describe in general, we have a simple characterization of quasiconvex functions in the class of quadratic forms:
\begin{lemma}[{\cite{vanhovezbMATH03046021}}]\label{lem:qc_r1c}
Let $\Phi\in\qnm$. The following are equivalent:
\begin{enumerate}
    \item\label{itm:a} $\Phi$ is quasiconvex;
    \item\label{itm:b} $\Phi(x\otimes y)\geq 0$ for all $x\in\R^n,\,y\in\R^m$;
    \item\label{itm:c} $\Phi$ is convex in rank one directions.
\end{enumerate}
\end{lemma}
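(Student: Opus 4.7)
I would prove (2) $\Leftrightarrow$ (3) separately and then establish the two-sided implication (1) $\Leftrightarrow$ (2), which carries the substantive content linking the variational condition to the pointwise one.

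The equivalence (2) $\Leftrightarrow$ (3) is an elementary one-variable observation. Fix $Z \in \R^{n\times m}$ and a rank-one matrix $x \otimes y$; because $\Phi$ is a quadratic form, $t \mapsto \Phi(Z + t\,x \otimes y)$ is a univariate quadratic in $t$ with leading coefficient $\Phi(x \otimes y)$. Its convexity therefore holds for every $Z$ if and only if $\Phi(x \otimes y) \geq 0$. For (1) $\Rightarrow$ (2) I would plug an oscillating laminate into the quasiconvexity inequality: given $x_0 \in \R^n$ and $y \in \R^m \setminus \{0\}$, a cutoff $\rho \in C_c^\infty(\R^m)$, and a smooth $1$-periodic function $\zeta$ of zero mean, set $\varphi_\epsilon(x) = \epsilon\,\rho(x)\,\zeta(x \cdot y/\epsilon)\,x_0$. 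A routine product-rule computation yields $\nabla \varphi_\epsilon(x) = \rho(x)\,\zeta'(x \cdot y/\epsilon)\,x_0 \otimes y + O(\epsilon)$. Plugging into the quasiconvexity inequality with $Z = 0$ and averaging the oscillating factor $\zeta'(x\cdot y/\epsilon)^2$ against its mean $\|\zeta'\|_{L^2([0,1])}^2$ (weak-$*$ convergence in $L^\infty$) yields, to leading order,
$$0 \leq \int \Phi(\nabla \varphi_\epsilon)\,dx = \|\zeta'\|_{L^2}^2\,\|\rho\|_{L^2}^2\,\Phi(x_0 \otimes y) + O(\epsilon),$$
forcing $\Phi(x_0 \otimes y) \geq 0$.

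For (2) $\Rightarrow$ (1) I would use Fourier analysis. Expanding gives $\Phi(Z + \nabla\varphi) - \Phi(Z) = \Phi(\nabla\varphi) + 2\langle CZ, \nabla\varphi\rangle$, and the cross term integrates to zero since $\int \nabla\varphi\,dx = 0$ for $\varphi \in C_c^\infty$; it therefore suffices to prove $\int \Phi(\nabla\varphi)\,dx \geq 0$. Since $\widehat{\nabla\varphi}(\xi) = i\,\hat\varphi(\xi) \otimes \xi$, Plancherel yields
$$\int \Phi(\nabla\varphi)\,dx = (2\pi)^{-m}\int \Phi_\C\bigl(\hat\varphi(\xi) \otimes \xi\bigr)\,d\xi,$$
where $\Phi_\C(w) = \langle w, Cw\rangle$ is the Hermitian extension of $\Phi$. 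Writing $\hat\varphi(\xi) = a(\xi) + i\,b(\xi)$ with $a, b$ real and using the symmetry of $C$, one checks $\Phi_\C(w) = \Phi(\mathrm{Re}\,w) + \Phi(\mathrm{Im}\,w)$; because $\xi$ is real this gives $\Phi_\C(\hat\varphi(\xi) \otimes \xi) = \Phi(a(\xi) \otimes \xi) + \Phi(b(\xi) \otimes \xi) \geq 0$ by (2), so the integrand is pointwise nonnegative and (1) follows.

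The main obstacle is the Fourier step in (2) $\Rightarrow$ (1), and it is exactly where the quadratic hypothesis on $\Phi$ is essential: the identity $\Phi_\C(w) = \Phi(\mathrm{Re}\,w) + \Phi(\mathrm{Im}\,w)$ bootstraps real rank-one nonnegativity into complex rank-one nonnegativity, which via Plancherel then implies the integral variational inequality. No analogous bootstrap is available for higher-degree rank-one convex forms, which is the classical reason why rank-one convexity and quasiconvexity part ways outside the quadratic setting.
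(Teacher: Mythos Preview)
Your proof is correct and matches the paper's approach on the substantive points. The equivalence (2) $\Leftrightarrow$ (3) via the second derivative of $t\mapsto\Phi(Z+t\,x\otimes y)$ and the implication (2) $\Rightarrow$ (1) via Plancherel together with the splitting $\Phi_\C(w)=\Phi(\Re w)+\Phi(\Im w)$ are essentially the paper's argument verbatim, including your closing remark that this last identity is where the quadratic hypothesis is indispensable.

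The one genuine difference is in the direction (1) $\Rightarrow$ (2). The paper does not argue this directly: it instead closes the cycle by appealing to the general fact that quasiconvexity implies rank-one convexity, citing \cite[Thm.~5.3(i)]{Dacorogna}. You give a self-contained proof via the oscillating test function $\varphi_\epsilon(x)=\epsilon\,\rho(x)\,\zeta(x\cdot y/\epsilon)\,x_0$ and a weak-$*$ limit. Your route has the advantage of being self-contained and of making transparent why rank-one matrices are distinguished (they are exactly the gradients of plane waves), at the cost of a few extra lines; the paper's route is shorter but outsources this step to a reference.
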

In general, functions satisfying the equivalent conditions in Lemma \ref{lem:qc_r1c} are called \textbf{rank-one convex}. It follows from the lemma that, in the class of quadratic forms, quasiconvexity reduces to rank-one convexity, which is a local notion that is, in theory, computable. In fact, it is a well known fact that quasiconvexity implies rank-one convexity in general, but the converse is an exceptionally difficult problem raised in \cite{morrey52} and partially settled in the negative in \cite{Sverak,Grabovski}\footnote{Remarkably, the example in \cite{Grabovski} is constructed by appealing to the Theory of Exact Relations coming from the study of composite materials, cf. \cite{GrabovskyBook,MiltonBook}.}, with the $2\times 2$ case remaining open to this day \cite{FaracoSzekelyhidi,HKL}. 

Lemma \ref{lem:qc_r1c} and Proposition \ref{lem:char_polyconvex} make it plain that polyconvex quadratic forms are quasiconvex. Also, the scalar cases $n=1$ or $m=1$ are completely uninteresting because all these semiconvexity notions collapse to convexity.
\begin{proof}[Proof of Lemma \ref{lem:qc_r1c}]
To see that \ref{itm:b} is equivalent to \ref{itm:c}, note that, by differentiating twice at $Z$ in the rank-one direction $x\otimes y$, we obtain the equivalent formulation of \ref{itm:c} as
$$
\langle x\otimes y, \nabla^2 \Phi(Z) [x\otimes y] \rangle\geq 0.
$$
However, the Hessian of $\Phi$ is just the linear map $C$, so \ref{itm:c} is equivalent to
$$
\langle x\otimes y, C (x\otimes y) \rangle\geq 0,
$$
which is \ref{itm:b}. 

Next, we show that \ref{itm:b} implies \ref{itm:a}. Note that 
\begin{align*}
    \int_{\R^m} \Phi(Z+\nabla \varphi(x))-\Phi(Z)\mathrm{\;d} x&=  \int_{\R^m} \langle Z,C\nabla \varphi(x)\rangle +\langle CZ,\nabla \varphi(x)\rangle + \langle \nabla \varphi(x),C\nabla \varphi(x)\rangle \dif x\\
    &=\int_{\R^m}  \langle \nabla \varphi(x),C\nabla \varphi(x)\rangle \dif x\\
     &=\int_{\R^m}  \langle \widehat{\nabla \varphi(\xi)},C\overline{\widehat{\nabla \varphi(\xi)}}\rangle \dif \xi\\
      &=\int_{\R^m}  \langle  \hat \varphi(\xi)\otimes \xi,C \overline{\hat{\varphi}(x)}\otimes \xi \rangle \dif \xi\\
      &=\int_{\R^m}  \Phi(  \Re\hat \varphi(\xi)\otimes\xi)+ \Phi(\Im{\hat{\varphi}(x)}\otimes \xi) \dif \xi\geq0,
\end{align*}
where the first equality follows by integration by parts, the second equality is given by Plancherel's Theorem. The fourth equality follows since the Fourier Transform of a derivative obeys $\widehat{\partial_k f}(\xi)=-\xi_k\mathrm i \hat f(\xi)$, up to a scalar multiple which depends on normalization. The last equality then follows by computation. This concludes the proof of \ref{itm:a}.

Finally, the fact that quasiconvexity implies rank-one convexity is true in general, see the classical reference \cite[Thm. 5.3(i)]{Dacorogna}, so we also have that \ref{itm:a} implies \ref{itm:c}.
\end{proof}
The above proof shows that the link between the gradients that appear in the definition of quasiconvexity and rank-one matrices is given by the Fourier Transform.

By writing 
\begin{equation}\label{eq:def_bq}
F(x,y)\coloneqq \Phi(x\otimes y),
\end{equation}
where $\Phi\in\qnm$, we obtain a \textbf{biquadratic form}.  That is a form that is homogeneous of degree $2$ in $x$ and simultaneously homogeneous of degree $2$ in $y$. We define 
$$
\bnm\coloneqq \{F\colon\R^{n}\times \R^m\rightarrow \R\colon F\text{ biquadratic form}\}.
$$
In view of Lemma \ref{lem:qc_r1c}, we would like to identify quasiconvex quadratic forms with nonnegative biquadratics. To this end, we define
\begin{align*}
    \qqnm \coloneqq \{\Phi\in\qnm\colon \Phi\text{ quasiconvex}\}, \qquad
    \pbnm \coloneqq \{F\in\bnm\colon F\geq 0\}.
\end{align*}
We will also use the abbreviations $\qqnm=\qqn$ and $\pbnm=\pbn$, as well as $\qnm=\qn$ and $\bnm=\bn$ when $n=m$. We have the following (see \cite[Ex. 2.11]{HarriszbMATH00052497}):

\begin{lemma}\label{lem:sigma}
The map $\sigma\colon \qnm \to \bnm$, $\Phi\mapsto F = \Phi(x\otimes y)$, is a linear surjection. Its kernel is the linear space spanned by the $2\times 2$ minors of $\mathcal{M}$.
Moreover, $\sigma(\qqnm)=\pbnm$. 

Algebraically, this map $\sigma$ is the canonical quotient map from the degree $2$ part of the coordinate ring of $\R^{m\times n}$, which is the vector spaces of homogeneous polynomials of degree $2$ in the entries of the matrices, to the (degree $2$ part of the) coordinate ring of the Segre embedding $\P^{n-1}\times \P^{m-1}$ whose ideal is generated by the $2\times 2$ minors.
\end{lemma}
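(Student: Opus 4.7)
The plan is to verify three claims in turn: $\sigma$ is linear and surjective, $\ker\sigma$ is spanned by the $2\times 2$ minors, and $\sigma(\qqnm)=\pbnm$. Linearity is immediate from the definition. For surjectivity, I would expand an arbitrary biquadratic as $F(x,y)=\sum_{i,j,k,\ell} c_{ij,k\ell}\, x_iy_jx_ky_\ell$ and lift it to $\Phi(Z)\coloneqq \sum_{i,j,k,\ell} c_{ij,k\ell}\, Z_{ij}Z_{k\ell}\in\qnm$; since $(x\otimes y)_{ij}=x_iy_j$, this satisfies $\sigma(\Phi)=F$.

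For the kernel, every $2\times 2$ minor of the generic matrix $Z$ vanishes identically on rank-one matrices, so the span of the $\binom{n}{2}\binom{m}{2}$ minors is contained in $\ker\sigma$. These minors are linearly independent, since the minor indexed by rows $i<j$ and columns $k<\ell$ carries a distinct leading monomial $Z_{ik}Z_{j\ell}$. A dimension count closes the gap:
\[
\dim\qnm-\dim\bnm=\binom{nm+1}{2}-\binom{n+1}{2}\binom{m+1}{2}=\frac{nm(n-1)(m-1)}{4}=\binom{n}{2}\binom{m}{2},
\]
so $\ker\sigma$ is exactly this span.

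Finally, $\sigma(\qqnm)=\pbnm$ is a direct translation using Lemma \ref{lem:qc_r1c}: $\Phi\in\qqnm$ if and only if $\Phi(x\otimes y)\geq 0$ for all $x\in\R^n$, $y\in\R^m$, if and only if $\sigma(\Phi)\in\pbnm$. The forward inclusion $\sigma(\qqnm)\subseteq \pbnm$ is then immediate; conversely, lifting any $F\in\pbnm$ to some $\Phi\in\qnm$ via the surjectivity step forces $\Phi$ to be quasiconvex.

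There is no real obstacle here: the algebraic content is only that the Segre variety $\P^{n-1}\times \P^{m-1}\hookrightarrow \P^{nm-1}$ has homogeneous ideal generated by the $2\times 2$ minors of the generic matrix, whose degree-two piece is spanned by those minors themselves, so $\sigma$ is precisely the quotient map in degree two. The mild point to check carefully is the linear independence of the minors, which is visible from distinct leading monomials, after which everything else is formal.
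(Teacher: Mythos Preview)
Your argument is correct and complete. The surjectivity via an explicit lift and the identification $\sigma(\qqnm)=\pbnm$ via Lemma~\ref{lem:qc_r1c} are exactly what the paper does (the paper packages the lift as a right inverse $\tau$ with symmetrized coefficients, but this is the same construction).

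The one genuine difference is in how the kernel is pinned down. The paper essentially \emph{cites} the description of the Segre ideal (the reference to \cite[Ex.~2.11]{HarriszbMATH00052497}) to conclude that the degree-two part of $\ker\sigma$ is spanned by the $2\times 2$ minors. You instead give a self-contained argument: the minors lie in the kernel, they are linearly independent by inspection of their distinct ``diagonal'' monomials $Z_{ik}Z_{j\ell}$, and the dimension count
\[
\binom{nm+1}{2}-\binom{n+1}{2}\binom{m+1}{2}=\frac{nm(n-1)(m-1)}{4}=\binom{n}{2}\binom{m}{2}
\]
closes the gap. This buys you an elementary proof that does not appeal to the Segre embedding as a black box, at the cost of a small combinatorial identity; the paper's route is shorter but outsources the content.
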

We will use the term \textbf{null Lagrangian} when referring to linear combinations of minors. The previous lemma says precisely that the null Lagrangians coincide with the ideal of the Segre embedding in degree $2$. This does not hold in higher degrees (even the above characterization of polyconvex functions Proposition \ref{prop:polyconvex} does not generalize). 

The null Lagrangians are precisely the functions $\Phi$ such that $\pm \Phi$ is quasiconvex, see \cite[Thm. 5.20]{Dacorogna}. In particular, the kernel of the map $\sigma$ above consists of homogeneous null Lagrangians of degree two.

By writing a biquadratic $F$ in the form
$$
F(x,y)=\sum_{ijkl}x_iy_jC_{ijkl}x_ky_l=\sum_{jk}y_j\underbrace{\left(\sum_{ik}x_iC_{ijkl}x_k\right)}_{T(x)_{kl}}y_l
$$
and imposing the symmetries 
$$
C_{ijkl}=C_{ilkj}=C_{kjil}
$$
we ensure that both the $x$-matrix $T(x)$ and the analogously defined $y$-matrix of $F$ are symmetric. It is also the case that the quadratic form on matrices defined by
$$
\Phi(Z)\coloneqq\sum_{ijkl}Z_{ij}C_{ijkl}Z_{kl}
$$
is such that $\sigma(\Phi)=F$. In fact, $\tau\colon F\mapsto \Phi$ thus defined is a right inverse of $\sigma$: if $\sigma(\Psi)=G$, then $\Psi$ differs by $\tau(G)$ only by a null Lagrangian. These considerations prove Lemma \ref{lem:sigma}.

In light of these considerations it is easy to characterize polyconvex quadratic forms:
\begin{lemma}\label{lem:char_polyconvex}
A form $\Phi\in\qnm$ is polyconvex if and only if $\sigma(\Phi)$ is a sum of squares (SOS).
\end{lemma}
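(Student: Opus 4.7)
The plan is to chase both implications through the linear map $\sigma$, relying on Proposition \ref{prop:polyconvex} and Lemma \ref{lem:sigma}.

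For the forward direction, I would suppose $\Phi$ is polyconvex and invoke Proposition \ref{prop:polyconvex} to write $\Phi(Z)=\sum_{i=1}^{nm}\langle Z,M_i\rangle^2+L(Z)$, where $L$ is a linear combination of $2\times 2$ minors. Since $\ker\sigma$ is precisely the span of those minors (Lemma \ref{lem:sigma}), applying $\sigma$ kills $L$ and yields
\[
\sigma(\Phi)(x,y)=\sum_{i=1}^{nm}\langle x\otimes y,M_i\rangle^2=\sum_{i=1}^{nm}(x^T M_i y)^2,
\]
which is manifestly a sum of squares in $\R[x,y]$.

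For the converse, I would assume $F\coloneqq \sigma(\Phi)$ is a sum of squares, write $F=\sum_i p_i^2$ for some $p_i\in \R[x,y]$, and argue that the $p_i$ may be taken to be bilinear in $(x,y)$. Granted this, $p_i(x,y)=x^T M_i y=\langle x\otimes y,M_i\rangle$ for suitable matrices $M_i\in\R^{n\times m}$, and
\[
\sigma\!\left(\Phi-\sum_i \langle Z,M_i\rangle^2\right)= F-\sum_i(x^T M_i y)^2=0.
\]
By Lemma \ref{lem:sigma}, the argument of $\sigma$ must be a null Lagrangian $L$, so $\Phi=\sum_i \langle Z,M_i\rangle^2 + L$ is the sum of a convex form and a null Lagrangian, hence polyconvex by Proposition \ref{prop:polyconvex}.

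The only non-formal step --- and hence the main obstacle --- is the claim that a biquadratic sum of squares is in fact a sum of squares of bilinear forms. The cleanest argument I would give appeals to the Newton polytope principle (due to Reznick): in any representation $F=\sum_i p_i^2$ the support of each $p_i$ can be taken in $\tfrac12 N(F)$, and for $F$ bihomogeneous of bidegree $(2,2)$ the lattice points of $\tfrac12 N(F)$ are exactly the bilinear monomials $x_a y_b$. Equivalently, the biquadratic SOS cone is the image under $Q\mapsto (x\otimes y)^T Q(x\otimes y)$ of the PSD cone on $\R^n\otimes \R^m$, and Cholesky-factoring such a $Q$ exhibits $F$ as a sum of squares of bilinear forms.
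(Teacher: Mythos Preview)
Your argument is correct and is precisely what the paper has in mind: the paper does not give a written proof of this lemma, merely remarking that ``in light of these considerations it is easy'' --- the considerations being Proposition~\ref{prop:polyconvex} and Lemma~\ref{lem:sigma}, which are exactly the two ingredients you invoke.

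One small remark on your ``main obstacle'': in the paper's setup, biquadratics are the degree-$2$ part of the coordinate ring of the Segre variety $\P^{n-1}\times\P^{m-1}$, so a sum of squares there is by construction a sum of squares of degree-$1$ elements, i.e., of bilinear forms $x^T M y$. Under that reading the reduction to bilinear $p_i$ is definitional and no Newton-polytope argument is needed. Your Reznick/Gram-matrix justification is nonetheless correct and handles the more general reading in which ``SOS'' is taken in the ambient polynomial ring $\R[x,y]$, so nothing is lost by including it.
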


As discussed in Appendix~\ref{sec:composites}, we are interested in extremal elements of {$\qqnm$}, notions that we will now clarify. In particular, we will now distinguish between \textbf{strong extremals}  and \textbf{weak extremals}, which we  define following \cite{HMcalcvar}:
\begin{definition}\label{def:SE}
Let $\Phi\in\qnm$. We say that $\Phi$ is a \textbf{strong extremal} of $\qqnm$ if the ray $\R_+ \sigma(\Phi)$ spanned by $\sigma(\Phi)$ is an extreme ray of the convex cone $\pbnm$. 
\end{definition}
We will also refer to extreme rays $F$ of $\pbnm$ as \textbf{strong extremals}. Recall that this means that if we can write $F=F_1+F_2$ for $F_1,\,F_2\in\pbnm$, then $F_1$ and $F_2$ are linearly dependent.
\begin{definition}\label{def:WE}
Let $\Phi\in\qnm$ (resp. $F\in\bnm$). We say that $\Phi$ (resp. $F$) is a \textbf{weak extremal} of $\qqnm$ (resp. $\pbnm$) if
$$
\Phi(x\otimes y)\geq \langle x,My\rangle^2\qquad(\text{resp. }F(x,y)\geq \langle x,My\rangle^2)\qquad\text{ for all }x\in\R^n,y\in\R^m
$$
implies that $M\in\R^{n\times m}$ is zero.
\end{definition}
In other words, $\Phi$ is a weak extremal of $\qqnm$ if and only if $\sigma(\Phi)$ is a weak extremal of $\pbnm$. On the other hand,  we can describe the relation between strong extremals of $\qqnm$ and $\pbnm$ very neatly using classic concepts of convex analysis. We have that $\ker \sigma $ is the \emph{lineality space} of $\qqnm$ and that $\tau$ is a linear map which induces an isomorphism of \emph{pointed cones} $\mathrm{im\,}\tau$ and $\pbnm$:
$$
\qqnm=\ker\sigma+\mathrm{im\,}\tau,\quad\tau\colon\pbnm\rightarrow\mathrm{im\,}\tau\text{ linear isomorphism}.
$$
Therefore, by the Krein--Milman theorem in the pointed cone $\pbnm$, all quasiconvex quadratic forms can be written as a null Lagrangian plus a sum of \emph{extreme rays} of the cone of nonnegative biquadratics (under the explicit map $\tau$). 

Weak extremals have a special Krein--Milman property of their own:
\begin{theorem}[{\cite[Sec. 6]{MiltonKM}}]
Any quasiconvex quadratic form can be written as the sum of a weak extremal (of $\qqnm$) and a (poly)convex quadratic form.
\end{theorem}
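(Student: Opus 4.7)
The plan is to pass to the biquadratic cone via Lemma \ref{lem:sigma} and then produce the decomposition by a compactness/maximization argument inside the finite-dimensional space $\bnm$. First I would observe that it suffices to prove the analogous statement for $\pbnm$: given $F \in \pbnm$ and a decomposition $F = F_1 + F_2$ with $F_1$ weak extremal of $\pbnm$ and $F_2 \in \Sigma_{n,m}$, surjectivity of $\sigma$ lets us lift $F_2$ to some $\Phi_2 \in \qqnm$, which is polyconvex by Lemma \ref{lem:char_polyconvex}; then $\Phi_1 \coloneqq \Phi - \Phi_2$ is quasiconvex (since $\sigma(\Phi_1) = F_1 \in \pbnm$) and weak extremal of $\qqnm$, because weak extremality depends only on $\sigma(\Phi_1)$ by Definition \ref{def:WE}.

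For the reduced statement, the strategy is to subtract the \emph{largest possible} biquadratic sum of squares from $F$. I would fix $F \in \pbnm$ and set
$$
\Sigma(F) \coloneqq \{G \in \Sigma_{n,m} : F - G \in \pbnm\}.
$$
This is convex, closed, and contains $0$. Next I would verify compactness: for $G \in \Sigma(F)$ we have $0 \leq G \leq F$ pointwise, so $\sup_{S^{n-1}\times S^{m-1}} G \leq \sup_{S^{n-1}\times S^{m-1}} F$, and this sup-norm bound controls the coefficients of $G$ since restriction from $\bnm$ to $C(S^{n-1}\times S^{m-1})$ is a linear injection out of a finite-dimensional space.

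I would then pick a linear functional $\ell$ on $\bnm$ that is \emph{strictly positive} on $\pbnm \setminus \{0\}$, for instance
$$
\ell(G) \coloneqq \int_{S^{n-1}\times S^{m-1}} G(x,y) \dif \mu(x,y)
$$
for any measure $\mu$ of full support. By compactness, $\ell$ attains its maximum on $\Sigma(F)$ at some $G^*$, and I set $F_2 \coloneqq G^* \in \Sigma_{n,m}$ and $F_1 \coloneqq F - G^* \in \pbnm$.

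The heart of the argument is to check that $F_1$ is weak extremal. If not, Definition \ref{def:WE} yields a nonzero $M \in \R^{n\times m}$ with $F_1 - \langle x, My\rangle^2 \in \pbnm$. But then $G^* + \langle x, My\rangle^2 \in \Sigma_{n,m}$ still lies in $\Sigma(F)$, and
$$
\ell\bigl(G^* + \langle x, My\rangle^2\bigr) = \ell(G^*) + \ell\bigl(\langle x, My\rangle^2\bigr) > \ell(G^*),
$$
since $\langle x, My\rangle^2 \in \pbnm \setminus \{0\}$ whenever $M \neq 0$, contradicting maximality. The step requiring the most care is the compactness of $\Sigma(F)$, which crucially relies on $\bnm$ being finite-dimensional; the rest follows cleanly from having chosen $\ell$ strictly positive on all of $\pbnm \setminus \{0\}$, which is precisely what makes the maximization sensitive to any remaining subtractable square and hence forces weak extremality of the residue.
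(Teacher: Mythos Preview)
Your argument is correct. The reduction to $\pbnm$ via $\sigma$ and Lemma~\ref{lem:char_polyconvex} is clean, the set $\Sigma(F)$ is indeed compact (pointwise bounds on $S^{n-1}\times S^{m-1}$ control coefficients in the finite-dimensional space $\bnm$), and maximizing a strictly positive linear functional over it forces the residue $F_1$ to be weak extremal exactly as you say.

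As for comparison: the paper does \emph{not} supply its own proof of this theorem. It is stated as a cited result from \cite[Sec.~6]{MiltonKM} and immediately followed by the biquadratic reformulation (the Corollary), with no argument given in the text. So there is nothing in the paper to compare your proof against beyond noting that your approach---subtracting a maximal sum of squares by a compactness/optimization argument in the finite-dimensional cone---is the natural way to establish this Krein--Milman-type decomposition and is essentially what one finds in Milton's original treatment.
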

Equivalently, in the language of biquadratics we have, cf. Lemma \ref{lem:char_polyconvex}:
\begin{cor}
A nonnegative biquadratic can be written as a weak extremal  plus a sum of squares.
\end{cor}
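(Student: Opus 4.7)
The plan is to deduce this corollary from the Harutyunyan--Milton--style Krein--Milman theorem stated just above it, by transporting the decomposition through the quotient map $\sigma\colon\qqnm\to\pbnm$.

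First I would fix any $F\in\pbnm$ and use surjectivity of $\sigma$ (Lemma \ref{lem:sigma}) together with the identity $\sigma(\qqnm)=\pbnm$ to pick a preimage $\Phi\in\qqnm$ with $\sigma(\Phi)=F$. Applying the theorem of Milton to $\Phi$, I would write $\Phi = \Phi_w + \Phi_p$ where $\Phi_w\in\qqnm$ is a weak extremal and $\Phi_p\in\qqnm$ is polyconvex. Pushing this decomposition through $\sigma$ gives
\begin{equation}
F \;=\; \sigma(\Phi) \;=\; \sigma(\Phi_w) + \sigma(\Phi_p).
\end{equation}

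Next I would identify the two summands as objects of the asserted types. By Definition \ref{def:WE}, $\Phi_w$ is a weak extremal of $\qqnm$ if and only if $\sigma(\Phi_w)$ is a weak extremal of $\pbnm$, so the first summand is a weak extremal of $\pbnm$. By Lemma \ref{lem:char_polyconvex}, $\sigma(\Phi_p)$ is a sum of squares, so the second summand is an SOS biquadratic. This gives the desired decomposition $F = F_w + F_{\mathrm{sos}}$.

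There is really no serious obstacle here; the only thing to be careful about is that the notions of weak extremality on the $\Phi$-side and the $F$-side match, and that polyconvexity on the $\Phi$-side corresponds exactly to SOS on the $F$-side. Both correspondences have been recorded in the paper (Definition \ref{def:WE} and Lemma \ref{lem:char_polyconvex} respectively), so the proof amounts to applying the upstairs Krein--Milman statement and then applying $\sigma$.
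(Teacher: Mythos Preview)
Your proof is correct and is exactly the argument the paper intends: the corollary is introduced with ``Equivalently, in the language of biquadratics we have, cf.\ Lemma~\ref{lem:char_polyconvex}'', and you have simply spelled out that equivalence by lifting along $\sigma$, applying Milton's theorem, and pushing back down using Lemma~\ref{lem:char_polyconvex} and the remark after Definition~\ref{def:WE}.
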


If either $n=2$ or $m=2$, the structure of weak extremals of $\qqnm$ is trivial (linear combination of $2$-minors), as follows from the old result of Terpstra \cite{terpstra} (see also Calder\'on \cite{calderon}). We can also infer that strong extremals are just perfect squares. It also follows that quasiconvex quadratic forms are polyconvex in this case, a fact that is no longer true if $n=m=3$, as proved in the same work of Terpstra \cite{terpstra}.

It is easy to see that if $\Phi\in\qqnm$ is a strong extremal, then it is either  a weak extremal or a perfect square. Conversely, in the cases $n\leq 2$ or $m\leq 2$, the only weak extremals are null Lagrangians, so there are no nontrivial examples of extremals (of either kind). On the other hand, for $n=m=3$, the classical example of Choi and Lam from 1977 \cite{choilamzbMATH03540962} shows that nontrivial strong extremals of $\mathrm{PB}_3$ exist. In higher dimensions $n,m\geq4$, a recent example of Harutyunian and Hovsepyan from 2021 shows that weak extremals can fail to be strong extremals in $\mathrm{PB}_n$. Here, we focus on the three dimensional case $n=m=3$ and prove the following striking result, that gives a positive answer to a question of Harutyunian--Milton \cite{HarutyunyanMiltonCPAM}, formulated clearly in the first part of \cite[Conj. 2.7]{harutyunyan2021extreme}:
\begin{theorem}\label{thm:main}
Weak extremals of $\mathrm{PB}_3$ are extreme rays.
\end{theorem}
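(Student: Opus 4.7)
My plan is to prove the contrapositive: given $F \in \mathrm{PB}_{3}$ that is not a strong extremal, construct a nonzero matrix $M$ such that $F(x,y) \ge \langle x, My\rangle^{2}$ for all $x,y$, violating weak extremality. The first reduction is to describe the face of $F$ via its real zero set. Let $Z(F) \subset \P^{2}(\R)\times \P^{2}(\R)$ be the set of pairs $([x],[y])$ with $F(x,y)=0$; any element of the face of $F$ must vanish on $Z(F)$, so the hypothesis $\dim \mathcal{F}(F) \ge 2$ produces a biquadratic $G \in \mathrm{PB}_{3}$, linearly independent of $F$, vanishing on $Z(F)$. Via the Segre embedding $\P^{2}\times \P^{2} \hookrightarrow \P^{8}$, bilinear forms $\langle x, My\rangle$ correspond to linear forms on $\P^{8}$, so the useful intermediate statement is: some nonzero $M$ satisfies $\langle x, My\rangle \equiv 0$ on $Z(F)$. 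The passage from vanishing to the quantitative domination is a compactness argument on $S^{2}\times S^{2}$ using that, since $F$ is not a square (a consequence of weak extremality), $F$ vanishes to order exactly two along the generic locus of $Z(F)$, matching the order of $\langle x, My\rangle^{2}$.

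Next I would analyze $Z(F)$ through the acoustic tensor. Writing $F(x,y) = x^{T}T(y)x$ with $T(y) \succeq 0$, set $d(y) = \det T(y)$, a nonnegative ternary sextic. The projection $Z(F) \to \P^{2}$ lands in the curve $C = \{d=0\}$. If $d \equiv 0$, then $T(y)$ is everywhere singular; a Grassmannian argument selects a rational section $y \mapsto v(y) \in \ker T(y)$ which, after clearing denominators, gives a nonzero $M$ with $My \in \ker T(y)$ for generic $y$, whence $\langle x, My\rangle$ vanishes on $Z(F)$. So assume $d \not\equiv 0$; then $Z(F)$ sits over the sextic curve $C$ with generic fiber of length one (over the rank-two locus of $T(y)$), degenerating only where $T(y)$ drops to rank at most one.

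The crux is to invoke the generalization of Kunert--Scheiderer established in Section~\ref{sec:ternary_sextics}: a non-square nonnegative ternary sextic is extremal in the cone of nonnegative sextics if and only if its complex variety is a rational curve with only real singularities. The plan is to lift the auxiliary $G$ to a pencil of PSD acoustic tensors $T(y) + t\,T_{G}(y)$, read off a nontrivial linear system of sections of an appropriate line bundle on the normalization of $C$, and use the tight dimension counts delivered by the ternary-sextic classification to force a bilinear form $\langle x, My\rangle$ into the vanishing ideal of $Z(F)$. The borderline case $d = p(y)^{2}$ falls outside the classification and must be handled separately: here $T(y)$ has rank at most two everywhere, and the cofactor matrix of $T(y)$ yields a canonical bilinear form vanishing on $Z(F)$.

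The principal obstacle is the bridge from a linear decomposition of $F$ in its face to nonlinear structural information about $d$, since $F \mapsto \det T$ is degree three in $F$; lifting the problem to line bundles on the normalization of $C$ is what makes this tractable, and the Kunert--Scheiderer classification is precisely the tool that closes the requisite Riemann--Roch-style dimension count. Controlling the argument when $C$ is reducible or highly singular, and separately treating the square case $d = p^{2}$, will require the most care.
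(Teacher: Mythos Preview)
There are two genuine gaps.

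First, the compactness step: set-theoretic vanishing of $\langle x,My\rangle$ on $Z(F)$ is not enough to obtain $F\ge\alpha\langle x,My\rangle^{2}$. At a real zero $(x_{0},y_{0})$ the Hessian $F''(x_{0},y_{0})$ can have kernel of dimension $3$ or $4$, strictly larger than the trivial span of $(x_{0},0)$ and $(0,y_{0})$. Domination near such a point requires $\ker F''(x_{0},y_{0})\subseteq\ker G''(x_{0},y_{0})$ for $G=\langle x,My\rangle^{2}$, and a direct computation shows this is the condition $\langle M,\,x_{0}w^{T}+vy_{0}^{T}\rangle=0$ for every $(v,w)\in\ker F''(x_{0},y_{0})$, i.e.\ $M\perp L_{F}$ in the notation of Section~\ref{sec:weak_extremal_char}. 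That is precisely Theorem~\ref{thm:char_we}. Your order-two matching only handles zeros where the Hessian has maximal rank, which is the nine-distinct-zeros situation of Theorem~\ref{thm:WE_nine_zeros}, not the general one you are aiming for.

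Second, the core construction of $M$ is not carried out, and the route you sketch runs into the obstacle you yourself name: $F\mapsto\det T_{F}$ is cubic, so $\det(T_{F}+tT_{G})$ is not a pencil of sextics, and the proposed fix via line bundles on the normalization of $C=\{\det T_{F}=0\}$ is left unspecified (which bundle, which sections, and how the count closes). Note also that Section~\ref{sec:counterexample} exhibits strong extremals $F$ for which $\det T_{F}$ is \emph{not} an extremal sextic, so the curve $C$ by itself does not carry the information your argument would need. The paper's proof avoids both problems by taking determinants of the \emph{summands} rather than of $F$. Assuming $F$ weak extremal (so $\dim L_{F}=9$) but not strong, it writes $F=H_{1}+H_{2}$ with $H_{i}$ nonnegative, linearly independent, not squares, and sets $h_{i}=\det T_{H_{i}}$. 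Since $\ker F''(x_{0},y_{0})\subseteq\ker H_{i}''(x_{0},y_{0})$ at every common zero, Lemma~\ref{lem:roots} transfers each direction of $\ker F''$ to a direction of $\ker h_{i}''$; combined with $\dim L_{F}=9$ this forces enough real singular structure on each $h_{i}$ that Theorem~\ref{thm:exteme_ternary_sextic_conditions} makes both $h_{1}$ and $h_{2}$ extremal. They lie in the pencil of Lemma~\ref{lem:resolutionssing}, which contains a unique non-square extremal, giving the contradiction. The nonlinearity of $\det$ never has to be confronted, because one only evaluates it at the two points $H_{1},H_{2}$.
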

Therefore, we can also infer the consequence for quadratic forms on matrices:
\begin{cor}\label{cor:main}
Weak extremals of $\mathrm{QQ}_3$ are strong extremals.
\end{cor}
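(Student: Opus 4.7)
The plan is to obtain Corollary~\ref{cor:main} as a direct consequence of Theorem~\ref{thm:main} through the map $\sigma\colon\mathrm{Q}_3\to\mathrm{B}_3$ from Lemma~\ref{lem:sigma}. The entire content is that Definitions~\ref{def:SE} and \ref{def:WE} are designed so that weak (resp.\ strong) extremality of $\Phi\in\mathrm{QQ}_3$ is \emph{by definition} the same as weak extremality (resp.\ spanning an extreme ray) of $\sigma(\Phi)\in\mathrm{PB}_3$. So the corollary amounts to transporting Theorem~\ref{thm:main} across this dictionary.

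Concretely, first I would let $\Phi\in\mathrm{QQ}_3$ be weak extremal and set $F\coloneqq\sigma(\Phi)$. By Definition~\ref{def:WE}, the only $M\in\R^{3\times 3}$ for which $F(x,y)\geq \langle x,My\rangle^2$ for all $x,y$ is $M=0$; but this is the very statement that $F$ is a weak extremal of $\mathrm{PB}_3$. Then I would apply Theorem~\ref{thm:main} to $F$ to obtain that $\R_+F$ is an extreme ray of $\mathrm{PB}_3$, and conclude by Definition~\ref{def:SE} that $\Phi$ is a strong extremal of $\mathrm{QQ}_3$.

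There is no real obstacle here. The only subtlety to keep in mind is that $\mathrm{QQ}_3$ itself is not a pointed cone, since it contains the lineality space $\ker\sigma$ of homogeneous null Lagrangians of degree two. Definition~\ref{def:SE} handles this by declaring strong extremality in terms of extreme rays of the pointed image $\mathrm{PB}_3=\sigma(\mathrm{QQ}_3)$, so the extremality argument happens downstairs where the cone is pointed, and no work beyond Theorem~\ref{thm:main} and a bookkeeping translation through $\sigma$ is required.
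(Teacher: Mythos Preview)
Your proposal is correct and matches the paper's own treatment: the paper does not even write out a separate proof, presenting the corollary as an immediate consequence of Theorem~\ref{thm:main} via the observation (made just after Definition~\ref{def:WE}) that $\Phi$ is a weak extremal of $\mathrm{QQ}_3$ if and only if $\sigma(\Phi)$ is a weak extremal of $\mathrm{PB}_3$, together with Definition~\ref{def:SE}. Your bookkeeping through $\sigma$ and your remark on the lineality space are exactly the content of that dictionary.
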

The second part of \cite[Conj. 2.7]{harutyunyan2021extreme}, which was first stated in \cite[Conj. 2.8]{HarutyunyanMiltonCPAM}, asks if for nontrivial weak extremals in dimension 3, we also have that the determinant of the $x$-matrix $T(x)$ is extremal in the cone of nonnegative sextics. We disprove this assertion in Section \ref{sec:counterexample}.

\section{Characterization of weak extermals}\label{sec:weak_extremal_char}

In this section, we give alternative views on the definition of weak extremal quasiconvex functions in terms of convex and real algebraic geometry.
We will show that a quasiconvex quadratic $\Phi$ is weak extremal if and only if the zeroes (considered as matrices of rank $1$) of the associated biquadratic $F = \Phi(x\otimes y)$ span their ambient space of $n\times m$ matrices. 
To make this precise, for a biquadratic $F$ on $\P^{n-1}\times \P^{m-1}$ we define the subspace $L_F$ of real $n\times m$ matrices as 
$$L_F = \mathrm{span} \{xw^T+vy^T\colon (x,y) \in \ProjP^{n-1 }\times \ProjP^{m-1},\, F(x,y) = 0,\,  (v,w) \in \ker F''(x,y) \}$$
where $F''$ is the Hessian of $F$.
Since the form $F$ is bihomogeneous in $x$ and $y$, the kernel of the Hessian of $F$ always contains the vectors $(x,0)$ and $(0,y)$ (by Euler's identity for homogeneous polynomials). In particular, it is at least $2$-dimensional.

\begin{lemma}\label{lemma:difference_two_nonnegative_quadratics}
Let $Q_1$ and $Q_2$ be nonnegative quadratics such that $Q_1(x)=0$ implies $Q_2(x)=0$.  Then there exists a real number $\alpha > 0$ such that $Q_1 \geq \alpha Q_2$.
\end{lemma}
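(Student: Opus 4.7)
The plan is to turn the inclusion of zero sets into a coercivity estimate using nothing beyond the PSD structure of $Q_1$ and $Q_2$. I would write $Q_i(x)=x^{T}A_i x$ for symmetric matrices $A_i\succeq 0$. The standard fact that $x^{T}Ax=0$ forces $Ax=0$ when $A\succeq 0$ lets me translate the hypothesis $\{Q_1=0\}\subseteq\{Q_2=0\}$ into the kernel inclusion $\ker A_1\subseteq\ker A_2$.

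Next I would decompose $\R^{n}=K\oplus K^{\perp}$ with $K=\ker A_1$. Because $A_2$ also vanishes on $K$, a direct expansion shows that both $Q_1$ and $Q_2$ depend only on the component of $x$ in $K^{\perp}$: writing $x=x_{K}+x_{K^{\perp}}$, the cross terms $\langle x_{K^{\perp}}, A_i x_{K}\rangle$ vanish because $A_i x_{K}=0$, as does $Q_i(x_K)$. Restricted to $K^{\perp}$, $A_1$ is positive definite, so on the unit sphere of $K^{\perp}$ the minimum of $Q_1$ is attained at a strictly positive value $\lambda_1$, while $Q_2$ is bounded above on the same sphere by some $\lambda_2\geq 0$. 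By homogeneity this gives
\[
Q_1(x)\;\geq\;\lambda_1\,\|x_{K^{\perp}}\|^{2}\;\geq\;\frac{\lambda_1}{\max(\lambda_2,1)}\,Q_2(x)
\]
for every $x\in\R^{n}$, and the prefactor is the required $\alpha>0$ (the edge cases $Q_1\equiv 0$, which by hypothesis forces $Q_2\equiv 0$, and $Q_2\equiv 0$, where any $\alpha>0$ works, are handled by the $\max$).

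I do not expect any serious obstacle; the statement is essentially the linear-algebraic equivalence, among PSD matrices, between the kernel inclusion $\ker A_1\subseteq\ker A_2$ and the L\"owner inequality $A_1\succeq \alpha A_2$ for some $\alpha>0$. The one nuance worth emphasizing is that the argument genuinely uses the \emph{quadratic} (not merely nonnegative polynomial) nature of $Q_1$ and $Q_2$: a naive compactness argument on the unit sphere would not rule out $Q_1/Q_2\to 0$ near a common zero for general nonnegative polynomials, but the linearity of the kernel together with the quadratic lower bound $Q_1\gtrsim \|x_{K^{\perp}}\|^{2}$ on the complement closes the loop.
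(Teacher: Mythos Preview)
Your proof is correct and essentially identical to the paper's: both write $Q_i(x)=x^{T}M_ix$ with $M_i\succeq 0$, convert the zero-set hypothesis into the kernel inclusion $\ker M_1\subseteq\ker M_2$, reduce to $(\ker M_1)^{\perp}$ where $Q_1$ is positive definite, and take $\alpha$ as the ratio of the smallest nonzero eigenvalue of $M_1$ to the largest eigenvalue of $M_2$ (with the degenerate cases $Q_1\equiv 0$ or $Q_2\equiv 0$ handled separately). The only cosmetic difference is your use of $\max(\lambda_2,1)$ to absorb the edge cases into a single formula.
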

\begin{proof}
The quadratics can be written as $Q_1 = x^TM_1x$ and $Q_2 = x^TM_2x$ for positive semidefinite matrices $M_1$ and $M_2$. We have that $Q_i(x)=0$ if and only if $M_ix=0$. In particular, we can write $y=x-\mathrm{proj}_{\ker M_1}x$ and note that then $Q_i(x)=Q_i(y)$, so to check that $Q_1$ dominates $Q_2$, it suffices to restrict to $(\ker M_1)^\perp$, where $Q_1$ is \emph{strictly positive definite}. 

Let $\lambda_1>0$ be the minimal \emph{nonzero} eigenvalue of $M_1$ (if none exist, we have $Q_1=Q_2=0$) and $\lambda_2$ be the largest eigenvalue of $M_2$.
If $\lambda_2=0$, then $Q_2=0$, so the claim is satisfied trivially.
Else, let $\alpha \coloneqq \lambda_1/\lambda_2>0$, so 
$$
Q_1(x)=Q_1(y)\geq \lambda_1 |y|^2= \alpha\lambda_2|y|^2\geq \alpha Q_2(y)=\alpha Q_2(x),
$$
which concludes the proof.
\end{proof}

\begin{theorem}
\label{thm:char_we}
For a quasiconvex function $\Phi \in \qqnm$, let $F = \Phi( x\otimes y)$ be the associated nonnegative biquadratic.  Then $\Phi$ is weak extremal if and only if $\dim(L_F) = nm$.
\end{theorem}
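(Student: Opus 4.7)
The proof splits into the two implications of the equivalence.

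$(\Leftarrow)$ Suppose $\Phi$ is not weak extremal, so there is a nonzero $M \in \R^{n \times m}$ with $F(x, y) \geq (x^T M y)^2$ for all $x, y$. I will show $M \in L_F^\perp$, giving $\dim L_F \leq nm - 1$. At any zero $(x_0, y_0) \in \P^{n-1} \times \P^{m-1}$ of $F$, the inequality gives $(x_0^T M y_0)^2 \leq F(x_0, y_0) = 0$, so $\langle M, x_0 y_0^T\rangle = 0$. To capture the second-order generators, fix $(v, w) \in \ker F''(x_0, y_0)$ and expand the inequality along $\gamma(t) = (x_0 + tv, y_0 + tw)$. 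Since $F \geq 0$ vanishes at $(x_0, y_0)$ together with its gradient, and $(v, w)$ kills $F''(x_0, y_0)$, the polynomial $F \circ \gamma$ vanishes to order at least $3$ at $t = 0$; meanwhile, the leading Taylor term of $(\gamma_1(t)^T M \gamma_2(t))^2$ is $t^2 (v^T M y_0 + x_0^T M w)^2$. Matching the $t^2$ coefficients forces $v^T M y_0 + x_0^T M w = \langle M, x_0 w^T + v y_0^T\rangle = 0$, so $M$ annihilates every generator of $L_F$.

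$(\Rightarrow)$ Suppose $\dim L_F < nm$ and pick a nonzero $M \in L_F^\perp$. It suffices to produce an $\alpha > 0$ with $F(x, y) \geq \alpha (x^T M y)^2$ for all $(x, y)$, since then $\sqrt{\alpha}\, M$ certifies that $\Phi$ is not weak extremal. The local input is Lemma \ref{lemma:difference_two_nonnegative_quadratics}: at each real zero $(x_0, y_0) \in S^{n-1} \times S^{m-1}$ of $F$, the PSD quadratic form $P(v, w) = (v, w)^T F''(x_0, y_0) (v, w)$ on $\R^n \times \R^m$ and the rank-one PSD form $\ell(v, w)^2 = \langle M, x_0 w^T + v y_0^T\rangle^2$ satisfy $\ker P \subseteq \ker \ell$ --- this is exactly the localization of $M \in L_F^\perp$ at $(x_0, y_0)$ --- so the lemma yields $\alpha(x_0, y_0) > 0$ with $P \geq \alpha(x_0, y_0)\, \ell^2$.

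The main obstacle is to upgrade these pointwise Hessian dominances into a single $\alpha > 0$ valid everywhere, while controlling the higher-order corrections to the Taylor expansions of $F$ and $(x^T M y)^2$ near the zero locus of $F$. I would handle both by a compactness/contradiction argument: if no uniform $\alpha$ worked, extract $(x_k, y_k) \in S^{n-1} \times S^{m-1}$ with $F(x_k, y_k) / (x_k^T M y_k)^2 \to 0$ and pass to a limit $(x_0, y_0)$, necessarily a zero of $F$ with $x_0^T M y_0 = 0$. Rescale the perturbations $(x_k - x_0, y_k - y_0)$ to unit norm with limiting direction $(u_0, v_0)$; comparing the two-term Taylor expansions forces $(u_0, v_0) \in \ker F''(x_0, y_0)$ (else the $t^2$-coefficient of $F$ bounds the ratio below) while $\langle M, x_0 v_0^T + u_0 y_0^T\rangle \ne 0$ (else, by the local Hessian lemma at $(x_0, y_0)$, the ratio would stay bounded), contradicting $M \in L_F^\perp$. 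On the complement of a neighborhood of the zero locus, $F$ is bounded below by a positive constant on the compact bi-sphere while $(x^T M y)^2$ is bounded above, so the uniform dominance is trivial there.
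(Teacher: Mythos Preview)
Your $(\Leftarrow)$ direction is correct and matches the paper's argument.

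Your $(\Rightarrow)$ direction assembles the right ingredients---pick a nonzero $M\in L_F^\perp$, invoke Lemma~\ref{lemma:difference_two_nonnegative_quadratics} at each real zero to get $P\geq\alpha\,\ell^2$, and globalize by compactness---but the contradiction sketch does not close. After one blow-up you correctly force $(u_0,v_0)\in\ker F''(x_0,y_0)$. But then $M\perp L_F$ \emph{already gives} $\ell(u_0,v_0)=\langle M,\,x_0 v_0^T+u_0 y_0^T\rangle=0$, so you cannot also deduce $\ell(u_0,v_0)\neq 0$. Your justification for the latter (``else, by the local Hessian lemma, the ratio would stay bounded'') fails precisely here: once both second-order coefficients $P(u_0,v_0)$ and $\ell(u_0,v_0)^2$ vanish, the inequality $P\geq\alpha\,\ell^2$ is vacuous in that direction, and the value of $F(x_k,y_k)/G(x_k,y_k)$ along the sequence is governed by the cubic and quartic Taylor terms, about which the Hessian lemma says nothing. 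A single blow-up is therefore not enough; you would need to iterate (e.g.\ use that along the line $t\mapsto(x_0+tu_0,\,y_0+tv_0)$ the restrictions of $F$ and $G$ become $t^4F(u_0,v_0)$ and $t^4G(u_0,v_0)$, and repeat) or abandon the contradiction framing.

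The paper avoids this by taking the direct route: it uses the Hessian comparison $Q_1\geq\alpha Q_2$ together with the Taylor expansions of $F$ and $G$ to argue that $F\geq\alpha G$ on an entire neighborhood of each zero on the bi-sphere, with $\alpha$ chosen uniformly over the compact zero set via the smallest nonzero eigenvalue of $F''$, and then patches this with the trivial bound away from the zero set exactly as in your last sentence.
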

\begin{proof}
Assume that dim $L_F = nm$.  Suppose that there exists $M\in\R^{n\times m}$ such that 
$$F(a,b) \geq \langle a,Mb\rangle^2 = \langle M,ab^T\rangle^2\eqqcolon G(a,b)\quad \text{for all }a,\,b,$$
where we use the usual matrix inner product $\langle A,B\rangle = \mathrm{trace}(AB^T)$ for matrices $A,B\in\R^{n\times m}$.
We claim that $M=0$. Let $(x,y)\in\P^{n-1}\times\P^{m-1}$ be a root of $F$, so that $G(x,y)=0$.

We have the Taylor expansion in direction $(v,w)$:
$$
F(x+v,y+w)=F(x,y)+\langle F'(x,y),(v,w)\rangle+\tfrac{1}{2}\langle F''(x,y)(v,w),(v,w)\rangle+ \langle F'(v,w),(x,y)\rangle+F(v,w),
$$
which follows since the Taylor formula of a polynomial is exact and the  formula is symmetric in $(x,y)$ and $(v,w)$. Since $F$ is nonnegative, we have that $F'(x,y)=0$, so that
$$
F(x+v,y+w)=\tfrac{1}{2}\langle F''(x,y)(v,w),(v,w)\rangle+o(|(v,w)|^3)\quad\text{as }|(v,w)|\to 0.
$$
Of course, $G$ must have the analogous expansion, so we can infer that if $F''(x,y)(v,w)=0$, then
$$
0=\tfrac{1}{2}\langle G''(x,y)(v,w),(v,w)\rangle=\tfrac{1}{2}\tfrac{\dif }{\dif t}\big|_{t=0} G(x+tv,y+tw)=\langle M,xv^T+wy^T\rangle^2,
$$
where the last equality follows by direct calculation. It follows that $M$ is orthogonal to a space of full dimension, which proves the claim.

Suppose dim $L_F < nm$.  Choose a nonzero matrix $M$ that is perpendicular to $L_F$. We claim that there exists a constant $\alpha > 0$ such that 
$$F(a,b) \geq \alpha \langle M,ab^T\rangle^2\quad\text{for all }a,\,b$$
and thus $\Phi$ is not a weak extremal.
As before, let $G(a,b) \coloneqq \langle M,ab^T\rangle^2$. We first verify that $F(x,y)=0$ implies $G(x,y)=0$.   

To see this, note that from the Taylor expansion of $F$ around $(x,y)$ and in direction $(x,y)$ we have 
$$
0=F(2x,2y)=\tfrac{1}{2}\langle F''(x,y)(x,y),(x,y)\rangle,
$$
and we can use the positive definiteness of $F''(x,y)$ to infer that $(x,y)\in \ker F''(x,y)$. Therefore we find that $xy^T\in L_F$, so that $G(x,y)=0$ by definition of $G$.

We next recall that at the zeroes $(x,y)$ of $F$ with $|x|=|y|=1$  we have the expansion
$$
F(x+v,y+w)=\tfrac{1}{2}\langle F''(x,y)(v,w),(v,w)\rangle+o(|(v,w)|^3)\quad\text{as }|(v,w)|\to 0,
$$
where the error is uniform in $x,\,y$ (affine, to be precise). We write $Q_1(v,w)=\langle F''(x,y)(v,w),(v,w)\rangle$ and $Q_2(v,w)=\langle G''(x,y)(v,w),(v,w)\rangle=2\langle M,xv^T+wy^T\rangle^2$. Therefore, the zeroes of $Q_1$ are also zeroes of $Q_2$, so we can apply Lemma \ref{lemma:difference_two_nonnegative_quadratics} to obtain that $Q_1\geq \alpha Q_2$, where $\alpha>0$. This choice requires a bit of justification, since both $Q_i$ depend on $x,\,y$ of length one. By examining the proof of Lemma \ref{lemma:difference_two_nonnegative_quadratics}, we see that we can take $\alpha$ to be proportional with the minimal eigenvalue of $F''(x,y)$. It is elementary to check that the Hessian of a nontrivial biquadratic in $(x,y)$ cannot be zero at any point with $|x|=|y|=1$, so the minimal eigenvalue must be strictly positive.

Therefore, around the zeroes of $F$ with $|x|=|y|=1$, we have, with uniform error in $(x,y)$ as $|(v,w)|\to 0$,
\begin{align*}
    F(x+v,y+w)&=\tfrac{1}{2}\langle F''(x,y)(v,w),(v,w)\rangle+o(|(v,w)|^3)\\
    &\geq \tfrac{\alpha}{2}\langle G''(x,y)(v,w),(v,w)\rangle+o(|(v,w)|^3)\\
    &= \alpha G(x+v,y+w),
\end{align*}
so there exists a small relative neighborhood $\mathcal N\subset \{(a,b)\colon |a|=|b|=1\}$ of the zero set of $F$ where $F\geq \alpha G$. Outside of this set, $F>0$, so we can find a constant $\beta>0$ such $F\geq \beta >0$ in $\{(a,b)\colon |a|=|b|=1\}\setminus \mathcal N $. We conclude that in this set we have
$$
F\geq \frac{\beta}{\sup_{|a|=|b|=1}G(a,b)} G,
$$
so, by possibly making $\alpha$ smaller, we can conclude that $F\geq \alpha G$ everywhere. Therefore, $F$ cannot be a weak extremal, which concludes the proof of the second implication.
\end{proof}

It is clear that weak extremals are at the boundary of $\pbnm$. We have the following geometric characterization:
\begin{prop}\label{prop:WE_relint_of_face}
Let $F$ be a nonnegative biquadratic, $F\neq 0$. Let $K$ be the minimal face $K$ of $\pbnm$ that contains $F$ in its relative interior. The following are equivalent:
\begin{enumerate}
    \item $F$ is a weak extremal that is not strong extremal;
    \item $K$ has dimension at least two and contains no squares.
\end{enumerate}
\end{prop}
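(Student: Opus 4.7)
The plan is to deduce the equivalence from two standard facts about the closed convex pointed cone $\pbnm$: (i) any point of a convex set lies in the relative interior of a unique face, which is the minimal face containing it, so $F \in \mathrm{relint}(K)$, and consequently for every $G \in K$ there exists $\epsilon > 0$ with $F - \epsilon G \in K$; and (ii) faces of a convex cone are closed under summands, i.e., if $F = G_1 + G_2$ with $F \in K$ and $G_1, G_2 \in \pbnm$, then $G_1, G_2 \in K$.

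For the direction (1) $\Rightarrow$ (2), I would first observe that since $F$ is not a strong extremal, the minimal face $K$ cannot be the one-dimensional ray $\R_+ F$, hence $\dim K \geq 2$. To show $K$ contains no squares, I would argue by contradiction: if some $S(x,y) = \langle x, M y \rangle^2$ with $M \in \R^{n \times m} \setminus \{0\}$ lies in $K$, then fact (i) yields $\epsilon > 0$ with $F - \epsilon S \in K \subseteq \pbnm$. This gives $F(x,y) \geq \langle x, \sqrt{\epsilon}\, M y \rangle^2$ pointwise with $\sqrt{\epsilon}\, M \neq 0$, contradicting the weak extremality of $F$.

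For (2) $\Rightarrow$ (1), the hypothesis $\dim K \geq 2$ immediately rules out $F$ being a strong extremal. For weak extremality, I would again argue by contradiction: if there existed a nonzero $M$ with $F(x,y) \geq \langle x, M y \rangle^2$ for all $(x,y)$, then setting $S(x,y) = \langle x, M y \rangle^2 \in \pbnm \setminus \{0\}$, the decomposition $F = S + (F - S)$ with both summands in $\pbnm$, combined with fact (ii), forces $S \in K$. This places a nonzero square in $K$, contradicting the hypothesis.

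I expect no serious obstacle beyond unwinding definitions; the argument is essentially a clean application of convex analysis. The only points to check carefully are that $\pbnm$ is genuinely closed and pointed (pointedness follows since $\pm F \in \pbnm$ forces $F \equiv 0$) so that facts (i) and (ii) apply, and that $\langle x, M y\rangle^2$ is a nonzero element of $\pbnm$ whenever $M \neq 0$, which is immediate.
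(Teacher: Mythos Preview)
Your proof is correct and follows essentially the same approach as the paper: both arguments use that $F\in\mathrm{relint}(K)$ allows one to subtract a small multiple of any $G\in K$ while staying in $K$ (your fact~(i)), and that faces absorb summands (your fact~(ii)). The paper's write-up is terser and phrases fact~(i) via a ball of radius $r$ and fact~(ii) as ``a simple geometric argument,'' but the logical structure of both directions is identical to yours.
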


\begin{proof}
By definition, $F$ is a strong extremal if and only if $K$ has dimension one. Assume henceforth that $K$ has dimension at least two and let $r>0$ be the radius of a ball around $F$ such that $B(F,r)\cap K$ is contained in the relative interior of $K$. If $K$ contains a square $S$, we have that $F-rS\in K$. In particular this means $F-rS\geq 0$ so that $F$ cannot be weak extremal. Conversely, suppose that $F$ is not weak extremal so that we can write $F=G+S$, where $G\in\pbnm$ and $S$ is a square. If $S\notin K$, a simple geometric argument shows that $G=F-S\notin\pbnm$, so we must have that $S\in K$, which concludes the proof. 
\end{proof}

\section{Weak extremals which are strong extremals}\label{sec:weak-strong}

With this new characterization of weak extremals, we aim to prove Theorem \ref{thm:main}.  In the particular case when the biquadratic $F \in \mathrm{PB}_3$ has nine \emph{distinct} zeros, the proof is significantly simpler then in the general case, but illustrative of the overarching strategy.  Thus we begin the section by proving this specific case.

We write $\Pi_1(z) = x$ for the projection onto the first vector of $z = (x,y) \in \ProjP^2 \times \ProjP^2$. We make use of the following results from real algebraic geometry.

\begin{lemma}[\cite{Quarez2015}]\label{lem:quarez_same_component}
Let $F \in \mathrm{PB}_3$ be a nonnegative biquadratic with two different zeros $A$ and $B$ in $\ProjP^2 \times \ProjP^2$ such that $\Pi_1(A) = \Pi_1(B)$.  Then $F$ is a sum of squares and has infinitely many zeros.
\end{lemma}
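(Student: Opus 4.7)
The plan is to reduce the problem to an analysis of the single fiber of $F$ over $x_0 := \Pi_1(A) = \Pi_1(B)$. Writing $F(x,y) = y^T S(x) y$, where $S(x)$ is a symmetric $3\times 3$ matrix of quadratic forms in $x$, nonnegativity of $F$ forces $S(x) \succeq 0$ pointwise. Choose representatives $y_1,y_2 \in \R^3$ for the $\P^2$ components of $A$ and $B$; these are linearly independent because $A \neq B$ and $\Pi_1(A)=\Pi_1(B)$. From $y_i^T S(x_0) y_i = 0$ and $S(x_0)\succeq 0$ we conclude $S(x_0)y_i = 0$ for $i=1,2$, so $\dim \ker S(x_0)\geq 2$ and hence $\mathrm{rank}\, S(x_0)\leq 1$.

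From here I would split into two cases to deduce the infinitude of zeros. If $S(x_0)=0$, then $F(x_0,y)\equiv 0$ and all of $\{x_0\}\times \P^2$ consists of zeros of $F$. If instead $S(x_0)=vv^T$ for some nonzero $v\in\R^3$, then $F(x_0,y) = (v\cdot y)^2$, so the projective line $\{x_0\}\times \{v\cdot y = 0\}\subset \P^2\times \P^2$ is contained in the zero locus. In either case $F$ has infinitely many zeros, establishing the second assertion of the lemma.

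The substantive content is the SOS assertion, and this is where I anticipate the main obstacle. The plan is to invoke (or re-derive) the dichotomy underlying Quarez's classification: a nonnegative biquadratic on $\P^2\times \P^2$ is either a sum of squares or has a finite zero locus whose cardinality is bounded by a small constant (in the Choi--Lam regime, by $10$). Since our $F$ already has infinitely many zeros by the first two steps, it cannot belong to the non-SOS class and must therefore be SOS. An alternative, more constructive route is to fix coordinates with $x_0 = e_1$ and, in the rank-one subcase, $v = e_3$, so that $S(e_1) = e_3 e_3^T$. Then the vanishings force the top-left $2\times 2$ block $A(x)$ of $S(x)$ to be a PSD matrix of quadratic forms in $x$ vanishing at $e_1$; the diagonal entries of $A(x)$ are nonnegative ternary quadratic forms and hence SOS by Hilbert, and the Cauchy--Schwarz inequalities implicit in $S(x)\succeq 0$ pin down the off-diagonal entries enough to assemble an explicit SOS certificate for $F$. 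The zero case $S(x_0)=0$ admits an analogous but cleaner treatment, since the entire first block row and column of $S(x)$ already contain pieces vanishing at $x_0$.

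The principal difficulty lies in the SOS step: the fiber analysis gives only geometric information (a positive-dimensional zero locus), whereas producing an SOS decomposition requires turning that geometric constraint into an algebraic one on a Gram representation of $F$. This is precisely the content of Quarez's cited paper, and in our application I would cite it as a black box rather than reproduce the full argument.
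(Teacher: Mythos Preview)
The paper does not prove this lemma at all; it is stated with attribution to Quarez and used as a black box. Your proposal therefore already goes further than the paper: your fiber argument for the infinitude of zeros (reducing $S(x_0)$ to rank at most one and reading off a projective line of zeros in $\{x_0\}\times\P^2$) is correct and self-contained, and your plan to cite Quarez for the SOS conclusion matches exactly what the paper does. Your alternative constructive sketch is plausible but, as you note, incomplete; citing Quarez is the intended route here.
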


\begin{theorem}[\cite{KS2018}]\label{theorem:ternary_sextic_nine_zeros}
Let $f\in \R[x,y,z]$ be a nonnegative ternary sextic that is not a sum of squares.
Suppose that $\mathcal V(f) = \{x\in \R\P^2 \colon f(x) = 0\}$ is a set of 9 real points. Suppose that the Hessian of $f$ has rank $2$ at each of these 9 points.
Then there is a 2--dimensional linear space of sextics passing through $\mathcal V(f)$ which contains a unique (up to positive scaling) extreme nonnegative ternary sextic $g$ that is not a sum of squares.
Moreover, this unique sextic $g$ has a tenth zero which generically is not in $\mathcal V(f)$, but may degenerate into a higher (namely $A_3$-) singularity contained in $\mathcal V(f)$ (meaning that the Hessian of $g$ is only rank $1$ at one of the 9 points). 
Finally, every other nonnegative ternary sextic vanishing on $\mathcal V(f)$ which has finitely many real zeros and a tenth real zero is a positive rescaling of $g$.
The face of the cone of nonnegative ternary sextics that vanish at the $9$ real zeros of $f$ is a $2$-dimensional cone spanned by the square of a cubic and the extreme form $g$.
\end{theorem}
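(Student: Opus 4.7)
The strategy is to compute the face of the nonnegative ternary sextic cone containing $f$ by first identifying a small ambient linear space that the face must lie in, then producing the extreme ray $g$ by a one-parameter deformation, and finally counting zeros to confirm extremality.

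First I would use the elementary observation that a nonnegative polynomial must vanish to even order at each of its real zeros, so every nonnegative sextic $h$ vanishing on $\mathcal V(f)$ vanishes to order at least $2$ at each of the nine points. Let $V$ denote the linear space of sextics vanishing to order $\geq 2$ on $\mathcal V(f)$. The crucial claim is $\dim V = 2$. Since $f \in V$ and, assuming a cubic $c$ through all nine points exists, $c^2 \in V$ as well, we have $\dim V \geq 2$. For the opposite inequality I would invoke Cayley--Bacharach/Gorenstein duality for the ideal of $\mathcal V(f)$: the very existence of a non-SOS nonnegative sextic with simple nodes at the nine points pins down the Hilbert function of the doubled scheme enough to force $\dim V = 2$. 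This is the $2$-dimensional linear space of sextics through $\mathcal V(f)$ in the statement.

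Next, I would analyze the nonnegative sextics inside $V = \langle c^2, f \rangle$. The nonnegative cone in $V$ is itself $2$-dimensional with two extreme rays. One of them is $\R_+ \cdot c^2$; the other is obtained by pushing $f$ as far as possible away from $c^2$ while remaining nonnegative: set $t_* := \sup\{t \geq 0 \colon f - t c^2 \geq 0\}$ (which is finite because the nonnegative cone in $V$ is a proper cone) and define $g := f - t_* c^2$. By construction $g \geq 0$ and sits on the boundary of the nonnegative cone. That boundary failure means an additional real zero must appear at $t = t_*$: either a fresh real point disjoint from $\mathcal V(f)$, yielding the ``tenth zero'', or an existing node $p_i$ whose Hessian of $g$ drops to rank $\leq 1$, producing the $A_3$ degeneration case.

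To conclude extremality of $g$ in the full cone of nonnegative ternary sextics, I would count linear conditions: the ten real zeros (with the $A_3$ carrying extra multiplicity) impose enough conditions on the $28$-dimensional sextic vector space that the face of $g$ in the ambient cone collapses to one dimension, so $g$ is extreme. Uniqueness of $g$ within the $2$-dimensional face is automatic since the only other extreme ray there is the square $c^2$. For the final assertion, any nonnegative sextic $h$ vanishing on $\mathcal V(f)$ lies in $V$; if $h$ has only finitely many real zeros, it cannot have a $c^2$-component (because $c^2$ vanishes identically on the real cubic $\{c=0\}$), so $h$ is a positive multiple of $g$.

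The main obstacle will be the dimension computation $\dim V = 2$. Naive parameter counting predicts $\dim V \geq 1$, and special configurations (such as a complete intersection of two cubics) can push it to $3$, so one must genuinely use the arithmetic of the nine-point scheme, together with the hypothesis that $f$ is a nonnegative non-SOS sextic with simple nodes, to isolate the $2$-dimensional case. Handling the $A_3$ degeneration in parallel with the generic ten-distinct-zeros case also requires extra bookkeeping in the final condition count.
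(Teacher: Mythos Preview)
The paper does not give its own proof of this theorem; it is quoted from Kunert--Scheiderer. What the paper \emph{does} prove is the generalization in Section~\ref{sec:ternary_sextics} (Lemma~\ref{lem:resolutionssing} and Theorem~\ref{thm:exteme_ternary_sextic_conditions}), and your route differs substantially from that one. To show that the relevant sextics form a pencil, the paper passes to the embedded resolution of the nine nodes, computes the self-intersection of the strict transform of $\V(f)$ on the blow-up (it is $0$), exhibits the square of a cubic in the same class, deduces that the linear system is base-point-free, and concludes via the associated morphism that the system is a pencil. You instead aim for $\dim V=2$ through Cayley--Bacharach/Gorenstein duality. That is closer in spirit to the original Kunert--Scheiderer argument and avoids intersection theory on surfaces, but as you yourself flag, it requires real work to rule out the complete-intersection configuration; this is precisely where the hypothesis ``not a sum of squares'' must enter, and your sketch does not say how.

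There is a genuine error in your last paragraph. You claim that if $h\in V$ has only finitely many real zeros then ``it cannot have a $c^2$-component (because $c^2$ vanishes identically on the real cubic $\{c=0\}$)''. This is false: for $\alpha,\beta>0$ the form $h=\alpha g+\beta c^2$ vanishes only where \emph{both} $g$ and $c^2$ vanish, so $h$ still has finitely many real zeros --- in fact exactly the nine nodes, since $g$ does not vanish identically on $\{c=0\}$. The correct argument for the ``Finally'' clause uses base-point-freeness of the pencil away from the nine nodes (this is what the paper's intersection computation supplies): the tenth zero of $g$ does \emph{not} lie on $\{c=0\}$, so any strictly positive combination $\alpha g+\beta c^2$ has exactly nine real zeros and no tenth one; hence a tenth real zero forces $\beta=0$. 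Your extremality step (``count linear conditions on the $28$-dimensional space'') is also too loose to stand on its own; the cleaner argument, once $\dim V=2$ is known, is that any nonnegative sextic in the face of $g$ must share all of $g$'s singularities (including the tenth) and therefore already lies in the pencil, inside which $g$ is visibly an extreme ray distinct from $c^2$.
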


\begin{theorem}\label{thm:WE_nine_zeros}
Let $F \in \mathrm{PB}_3$ be weak extremal and suppose $F$ has nine distinct real zeros.  Then $F$ is strong extremal.
\end{theorem}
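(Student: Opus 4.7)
The plan is to translate the extremality question into one about the ternary sextic $\det T(y)$, where $F(x,y) = x^T T(y) x$, and then invoke Theorem~\ref{theorem:ternary_sextic_nine_zeros}.

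First I would fix the geometry of the nine zeros. Weak extremality of $F$ forbids any nontrivial bilinear square $\langle x, My\rangle^2$ from being dominated by $F$, so in particular $F$ is not a perfect square nor a nontrivial sum of squares. Lemma~\ref{lem:quarez_same_component}, applied to $\Pi_1$ and to the analogous second-factor projection, then forces the nine zeros of $F$ to have pairwise distinct $x$-coordinates $x_1,\dots,x_9$ and pairwise distinct $y$-coordinates $y_1,\dots,y_9$. Writing $F = x^T T(y) x$, nonnegativity gives $T(y) \succeq 0$ for all $y \in \R^3$ and $T(y_i) x_i = 0$, so $\det T(y)$ is a nonnegative ternary sextic vanishing at $y_1,\dots,y_9$. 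Moreover $\det T$ has no other real zero, since any such $y_*$ would produce a kernel vector $x_*$ of $T(y_*)$, yielding a tenth zero of $F$.

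Next I would verify the hypotheses of Theorem~\ref{theorem:ternary_sextic_nine_zeros} for $\det T$: namely, that it is not a sum of squares (requiring an argument ruling out the special case of $\det T$ being a cubic squared), and that its Hessian has rank $2$ at each $y_i$ (which should follow from the zeros of $F$ being ordinary, i.e.\ $F''(x_i,y_i)$ having maximal rank $4$ with kernel spanned by $(x_i,0)$ and $(0,y_i)$). The theorem then gives a $2$-dimensional face of nonnegative ternary sextics vanishing on $\{y_1,\dots,y_9\}$, spanned by $q(y)^2$ (for a cubic $q$ through the nine points) and by an extremal non-SOS sextic $g$ with a generic tenth real zero $y_{10}$. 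Write $\det T = \alpha q^2 + \beta g$ with $\alpha, \beta \geq 0$; since $\det T(y_{10}) > 0$ (else a tenth zero of $F$) while $g(y_{10}) = 0$, we get $\alpha > 0$ and $q(y_{10}) \neq 0$.

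Finally, assuming $F = F_1 + F_2$ with $F_j \in \mathrm{PB}_3$ nonzero and writing $F_j = x^T T_j(y) x$, the same analysis gives $\det T_j = \alpha_j q^2 + \beta_j g$ with $\alpha_j, \beta_j \geq 0$. The matrix identity $T = T_1 + T_2$, the global PSD conditions $T_j(y) \succeq 0$, the shared kernel data $T_j(y_i) x_i = 0$, and the fact that (by Proposition~\ref{prop:WE_relint_of_face}) the minimal face of $\mathrm{PB}_3$ containing $F$ admits no square should then force $T_j = \lambda_j T$, hence $F_j = \lambda_j F$. The main obstacle is precisely this last rigidification: since $F' \mapsto \det T'(y)$ is cubic rather than linear, the $2$-dimensional sextic face a priori admits a higher-dimensional family of biquadratic liftings. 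The resolution should exploit the PSD decomposition of the positive definite $T(y_{10})$ into $T_1(y_{10}) + T_2(y_{10})$, together with the pointwise rank-$\leq 2$ constraint on each $T_j(y_i)$ with $x_i$ in the kernel, in order to pin down all $T_j(y)$ globally from their ``sextic shadow''.
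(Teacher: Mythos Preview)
Your setup is sound and largely parallels the paper: using Lemma~\ref{lem:quarez_same_component} to get distinct projections of the nine zeros, passing to the ternary sextic $\det T(y)$, and invoking the Kunert--Scheiderer pencil. The gap is in the final step. You take an \emph{arbitrary} decomposition $F=F_1+F_2$ and try to force $T_j=\lambda_j T$ from the fact that $\det T_j$ lies in the two-dimensional pencil $\mathrm{span}\{q^2,g\}$. But the map $F'\mapsto \det T'(y)$ is cubic, and the linear space of biquadratics vanishing to the required order at the nine points may well have dimension larger than two; so knowing the ``sextic shadow'' $\det T_j$ does not pin down $T_j$. Your proposed workaround via the PSD decomposition at $y_{10}$ and the rank constraints at the $y_i$ is not enough: many PSD matrix pencils share a kernel pattern and a determinant, and you have no mechanism here to rule out nontrivial splittings inside the face.

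The paper avoids this problem entirely by choosing the decomposition more carefully. If $F$ is weak but not strong extremal, Proposition~\ref{prop:WE_relint_of_face} puts $F$ in the relative interior of a face $K$ of dimension at least two containing no square; now pick $H_1,H_2$ on the \emph{boundary} of $K$ with $F=H_1+H_2$. Being on the boundary forces each $H_i$ to acquire an extra real zero (either a genuinely new point or a higher singularity at one of the nine), and Lemma~\ref{lem:quarez_same_component} again shows that the tenth $x$-projections are distinct from the original nine and from each other (swapping to $y$ if needed). Hence each $h_i=\det T_i$ is a nonnegative ternary sextic with finitely many real zeros and a \emph{tenth} real zero. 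The last clause of Theorem~\ref{theorem:ternary_sextic_nine_zeros} then says each $h_i$ must be a positive rescaling of the unique extremal $g$ in the pencil; but $h_1$ and $h_2$ have different tenth zeros, contradicting that they are proportional. The key idea you are missing is precisely this: do not try to rigidify an arbitrary splitting, but instead push $H_1,H_2$ to the boundary so that each determinant is forced to be the \emph{same} extremal sextic, and then read off the contradiction from their distinct extra zeros.
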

\begin{proof}
We proceed with a proof by contradiction.
If $F$ is not strong extremal, by Proposition \ref{prop:WE_relint_of_face} it is in the relative interior of a face $K$ of dimension at least two that does not contain squares.  Let $H_1$ and $H_2$ be two distinct biquadratics on the boundary of $K$ such that $F = H_1 + H_2$.
Then $H_1$ and $H_2$ share the same 9 real zeros as $F$.  Moreover, $H_1$ and $H_2$ must each have an additional zero, either distinct from the 9 zeros of $F$ or among one of the 9 zeros, now with a higher singularity.

Suppose first that both $H_1$ and $H_2$ have a tenth real zero not in $\V(F)$.  Call them $z_1 = (x_1,y_1)$ and $z_2 = (x_2,y_2)$ for $H_1$ and $H_2$ respectively, with $z_1 \neq z_2$.  By Lemma \ref{lem:quarez_same_component}, $x_i$ must be distinct from the $x$-component of any of the original nine zeros, else $H_i$ will be a sum of squares, contradicting the fact that $F$ is weak extremal.  Likewise for $y_1$ and $y_2$.  If $x_1 = x_2$, then $y_1 \neq y_2$ and we may argue using the $y$-matrix instead.  Thus, assume that $x_1 \neq x_2$.

Letting $H_i(x) = y^TT_i(x)y$, for $i = 1,2$, define $h_i = \det T_i(x)$.  
Both $h_1,h_2$ are ternary sextics and they share the nine zeros in $\ProjP^2$ coming from projecting the zeros of $F$ onto the first vector.  Moreover, they each have a tenth zero distinct from each other and distinct from the original nine. They both have only finitely many real zeros because every real zero of $h_i$ lifts to a zero of $H_i$.
Thus by Theorem \ref{theorem:ternary_sextic_nine_zeros}, both  $h_1$ and $h_2$ are extreme ternary sextics that vanish on the same nine zeros with distinct tenth zeros.
However, this contradicts the uniqueness of the extreme ternary sextic vanishing on the 9 zeros.

Suppose that $H_1$ has a new tenth real zero while $\V(H_2) = \V(F)$.
Again $h_1$ and $h_2$ are both extreme ternary sextic by Theorem \ref{theorem:ternary_sextic_nine_zeros} vanishing on the 9 zeros which again contradicts uniqueness.  The same argument follows in the third case where $H_1$ and $H_2$ both have a tenth zero in $\V(F)$, but not at the same zero exhausting all possibilities.   
\end{proof}

The proof of Theorem \ref{thm:main} follows similarly to Theorem \ref{thm:WE_nine_zeros}.  However, it relies on a generalization of the Kunert-Scheiderer result proven in the next section.  For the moment, we will assume all results from Section \ref{sec:ternary_sextics} to be true. We also need the following result:

\begin{lemma}\label{lem:roots}
Let $F \in \mathrm{B}_3$ be a biquadratic and let $T(x)$ be the $x$-matrix of $F$.  Suppose $(v,w) \in \ker(F''(x_0,y_0))$ for a zero $(x_0,y_0)$ of $F$ with $F'(x_0,y_0) = 0$. Let $f = \det T(x)$, then $v \in \ker f''(x_0)$.

If furthermore $F \in \mathrm{PB}_3$ is a nonnegative real biquadratic that is not a sum of squares and $(x_0,y_0)$ is a real zero of $F$, then the following statements hold.
\begin{enumerate}
    \item If there exist a pair of linearly independent points, $(v_1,w_1)$ and $(v_2,w_2)$ in the kernel of $F''(x_0,y_0)$, where $v_j$ and $w_j$ are nonzero, then $v_1 \neq v_2$ and $w_1 \neq w_2$.
    \item The dimension of the kernel of $F''(x_0,y_0)$ cannot be greater than 4.
\end{enumerate}
\end{lemma}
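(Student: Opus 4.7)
The plan is to address the three assertions in order, with the critical-zero analysis of $F$ through its $x$-matrix $T(x)$ driving each case. For Part~1, I translate $F(x,y) = y^T T(x) y$ and exploit $F'(x_0,y_0) = 0$: the $y$-gradient gives $T(x_0) y_0 = 0$ while the $x$-gradient gives $y_0^T \partial_{x_j} T(x_0) y_0 = 0$ for every $j$. Via Jacobi's formula and the fact that $\mathrm{adj}(T(x_0)) \propto y_0 y_0^T$, these immediately yield $f(x_0) = 0$ and $f'(x_0) = 0$. Unpacking $(v,w) \in \ker F''(x_0,y_0)$ gives two further equations: from the $y$-block, $\partial_v T(x_0)\,y_0 + T(x_0)\,w = 0$, and from the $x$-block, $y_0^T \partial_v\partial_{x_j} T(x_0)\,y_0 + 2 y_0^T \partial_{x_j} T(x_0)\,w = 0$ for every $j$. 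These are precisely the compatibilities needed to construct, order by order, a formal perturbation $y(t) = y_0 + tw + t^2 y_2$ of the kernel vector satisfying $T(x_0+tv)\,y(t) = O(t^3)$. Since $y(0) = y_0 \neq 0$, the smallest eigenvalue of $T(x_0+tv)$ is $O(t^3)$, whence $f(x_0+tv) = \det T(x_0+tv) = O(t^3)$. Triple vanishing along the line in direction $v$ forces $v^T f''(x_0) v = 0$; when $F$ is nonnegative (so $f \geq 0$ near $x_0$ and $f''(x_0) \succeq 0$ at this local minimum), positivity upgrades this to the full linear-algebra statement $f''(x_0) v = 0$.

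For Part~2, I would argue by contradiction: after rescaling, assume $v_1 = v_2$, so linear independence forces $w' := w_1 - w_2 \neq 0$ and $(0, w') \in \ker F''(x_0,y_0)$. Evaluating the kernel equations on a purely-$y$ direction reduces them to $T(x_0) w' = 0$ together with $y_0^T \partial_{x_j} T(x_0)\,w' = 0$ for each $j$. If $w'$ is not a scalar multiple of $y_0$, then $\ker T(x_0) \supseteq \mathrm{span}(y_0, w')$, so $\mathrm{rank}\,T(x_0) \leq 1$; combined with $T(x_0) \succeq 0$ (from $F \geq 0$), this forces $T(x_0) = \mu\,\ell\ell^T$, whence $F(x_0,\cdot) = \mu (\ell \cdot y)^2$ vanishes on a whole $\P^1$ of $y$-values. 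Lemma~\ref{lem:quarez_same_component} then forces $F$ to be a sum of squares, contradicting the hypothesis. The conclusion $w_1 \neq w_2$ follows by the symmetric argument using the $y$-matrix $U(y)$ defined by $F(x,y) = x^T U(y) x$.

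For Part~3, consider the projection $\pi\colon \ker F''(x_0, y_0) \to \R^3$, $(v,w) \mapsto v$, whose kernel is $\ker F'' \cap (\{0\} \times \R^3)$. The analysis in Part~2 shows that every element of this intersection must have its $y$-component proportional to $y_0$, else $F$ would be a sum of squares; together with $(0,y_0) \in \ker F''$ coming from Euler's identity, this gives $\ker \pi = \R\cdot(0,y_0)$, one-dimensional. Since $\mathrm{image}(\pi) \subseteq \R^3$ has dimension at most three, rank-nullity yields $\dim \ker F''(x_0, y_0) \leq 1 + 3 = 4$.

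The main obstacle is Part~1: threading the two kernel equations through the Taylor expansion to construct $y(t)$ to the correct order, and then upgrading the resulting projective condition $v^T f''(x_0) v = 0$ to the stronger linear-algebra statement $f''(x_0) v = 0$, where the nonnegativity of $F$ enters essentially through the PSD-ness of $f''(x_0)$ at a local minimum of $f$. Parts~2 and~3 are then clean consequences of Lemma~\ref{lem:quarez_same_component} together with elementary rank-nullity bookkeeping.
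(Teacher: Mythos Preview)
Your treatment of Parts~2 and~3 is essentially the same as the paper's: both reduce to showing that $\ker T(x_0)$ is one-dimensional (else $F(x_0,\cdot)$ vanishes on a projective line and Quarez's lemma forces $F$ to be a sum of squares), and then Part~3 follows by rank--nullity for the projection $(v,w)\mapsto v$. Your write-up is in fact a bit more explicit about this projection than the paper's.

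The genuine difference is Part~1. The paper normalizes coordinates to $(x_0,y_0)=(e_1,e_1)$, $v=e_2$, imposes the constraints $F=0$, $F'=0$, $F''(v,w)=0$ symbolically (using a computer algebra system), and simply reads off that the Hessian of $\det T$ at $e_1$ has only its $(3,3)$ entry possibly nonzero; this establishes $v\in\ker f''(x_0)$ for \emph{arbitrary} $F\in\mathrm{B}_3$. Your perturbation argument is conceptually nicer and avoids machine verification: the two kernel equations are exactly the solvability conditions needed to build $y(t)=y_0+tw+t^2y_2$ with $T(x_0+tv)\,y(t)=O(t^3)$ (the compatibility for $y_2$ comes from contracting the $x$-block equation with $v$), giving $f(x_0+tv)=O(t^3)$ and hence $v^Tf''(x_0)\,v=0$. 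However, the upgrade from $v^Tf''(x_0)\,v=0$ to $f''(x_0)\,v=0$ genuinely uses $f''(x_0)\succeq 0$, i.e.\ nonnegativity of $F$; polarization together with the Euler relation $f''(x_0)\,x_0=0$ only yields $f''(x_0)\,v\perp\mathrm{span}(x_0,v)$, one condition short in $\R^3$. So you have proved Part~1 only under the extra hypothesis $F\in\mathrm{PB}_3$, not for general $F\in\mathrm{B}_3$ as the lemma states. Since every application of the lemma in the paper is to nonnegative, non-SOS biquadratics, your weaker version suffices there, but it does not establish the lemma as written.

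One small point shared by both arguments: in Part~2 neither you nor the paper explicitly disposes of the degenerate case $w'=w_1-w_2\in\R\,y_0$, in which $(v_1,w_1)$ and $(v_2,w_2)$ differ only by the Euler direction $(0,y_0)$; the statement should really be read modulo the Euler directions, and both proofs are fine under that reading.
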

\begin{proof}
After a change of coordinates, we can assume that $(x_0,y_0) = (1,0,0,1,0,0)$ and $v = (0,1,0)$.  
We then solve the equations $F(x_0,y_0) = 0$, $F'(x_0,y_0) = 0$, and $F''(x_0,y_0)(v,w) = 0$ using a computer algebra system in order to compute the Hessian of $\det T(x)$ at $(1,0,0)$. This shows that only the $(3,3)$-entry of this Hessian can be nonzero, i.e.~it takes the form
$$\begin{bmatrix}
0 & 0 & 0 \\
0 & 0 & 0 \\
0 & 0 & \ast
\end{bmatrix}$$
and so clearly contains $(0,1,0)$ in its kernel.
If we further assume that $(0,0,1,w')$ is in the kernel of $(F''(x_0,y_0))$, we get that the Hessian of $\det T(x)$ is the zero matrix.

So now we assume that $F$ is nonnegative and that $(x_0,y_0)$ is a real zero and show that if $(v_1,w_1), (v_2,w_2) \in \ker(F''(x_0,y_0))$, then $v_1 \neq v_2$.  
Suppose that $v_1 = v_2 = v$.  
Then the following vectors are contained in the kernel,
$$\begin{pmatrix}  x_0  \\ 0 \end{pmatrix},
\begin{pmatrix} 0 \\ y_0 \end{pmatrix},
\begin{pmatrix} v \\ w_1 \end{pmatrix},
\begin{pmatrix} v \\ w_2 \end{pmatrix}.$$
Consider the $3 \times 3$ block of $F''(x_0,y_0)$ corresponding to the partials with respect to the $y$ variables, denoted by $F''_y$.  
Clearly it has $y_0$ in its kernel and it also has $w_1-w_2$ in its kernel, since the vector $(v,w_1)-(v,w_2)$ is in the kernel of $F''(x_0,y_0)$.
Hence the rank of $F''_y$ is at most one.
Let $y_0$ and $v$ be two linearly independent vectors in $\ker F''_y$.
This implies that $F(x_0,y_0+sv) = 0$ for any $s \in \R$.  Thus $F$ has infinitely many zeros and must be a sum of squares, contradicting our assumptions. 
Symmetrically, the same argument also works for $w_1$ and $w_2$.

For the last statement, if the dimension of $\ker(F''(x_0,y_0))$ is 5, we again find that the $3 \times 3$ block $F''_y$ is rank one and thus we get that $F$ is a sum of squares, a contradiction. 
\end{proof}

With this result and our generalizations of Kunert and Scheiderer's work in the following section, we can now proof the main result of our paper.

\begin{proof}[Proof of Theorem~\ref{thm:main}]
Only one implication needs proof. Suppose that $F\in\mathrm{PB}_3$ is a weak extremal, but not strong extremal. By Theorem~\ref{thm:char_we}, we have that the dimension of $L_F$ is 9.  
If $F$ is not strong extremal, then it can be written as the sum $F = H_1 + H_2$ of two nonnegative biquadratics which are linearly independent, nontrivial, on the boundary of the face of $\mathrm{PB}_3$ contaning $F$ in its relative interior, and not squares by Proposition \ref{prop:WE_relint_of_face}.   
As before, $H_1$ and $H_2$ must have picked up a new zero, either distinct from $\V(F)$ or in such a way that the kernel of the Hessian has increased in dimension.

Write $H_i(x,y) = y^TT_i(x)y$ and let $h_i = \det T_i(x)$, for $i = 1,2$. 
By Lemma \ref{lem:roots} and Theorem \ref{thm:exteme_ternary_sextic_conditions}, $h_1$ and $h_2$ are extreme ternary sextics, not a sum of squares, and in the same pencil.  By the proof of Theorem \ref{thm:exteme_ternary_sextic_conditions}, such an extreme ternary sextic must be unique, giving a contradiction.
\end{proof}

\section{Extremal nonnegative ternary sextics}\label{sec:ternary_sextics}
We would like to prove the following generalization of the Kunert-Scheiderer result.
\begin{theorem}\label{thm:KS_generalization}
A nonnegative ternary sextic $f\in \R[x,y,z]$ that is not a sum of squares is (strong) extremal in the convex cone of nonnegative ternary sextics if and only if the curve 
\[
\V(f) = \{x\in \C\P^2\colon f(x) = 0\} \subset \C\P^2
\]
is a rational sextic and all of its singularities are real. 
\end{theorem}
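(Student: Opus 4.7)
The plan is to extend the strategy of Kunert--Scheiderer (Theorem \ref{theorem:ternary_sextic_nine_zeros}) from the special setting of nine nodal real zeros to the general curve-theoretic criterion in the statement. The guiding principle is that the face of the cone of nonnegative ternary sextics containing $f$ in its relative interior can be identified with the intersection of that cone with the linear system $V$ of real sextics that are ``singular along $\V(f)$ at least as deeply as $f$ is''; the dimension and positive part of $V$ are then controlled by the geometric genus of $\V(f)$ and the reality of its singularities.

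For the sufficiency direction (rational and all real singularities $\Rightarrow$ extremal): since $f\ge 0$ is not a sum of squares, each real zero of $f$ is automatically a real singular point of $\V(f)$, and by hypothesis the complete singular locus of $\V(f)$ consists of these real zeros. Using a rational parametrization $\varphi\colon \P^1 \to \V(f)$ together with the Noether--Pl\"ucker genus formula $g=\binom{5}{2}-\sum_P \delta_P = 0$, an adjoint / Cayley--Bacharach computation shows $\dim V \le 2$. The two-dimensional cone $V\cap \pbn$ is then spanned by the square of an adjoint cubic passing through the singularities and by $f$ itself; since $f$ is not a sum of squares, $f$ must span an extreme ray of $V\cap \pbn$ and hence of the whole cone of nonnegative sextics.

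For the necessity direction (extremal $\Rightarrow$ rational with real singularities): suppose for contradiction that either the geometric genus of $\V(f)$ is positive or that $\V(f)$ carries a complex-conjugate pair of singularities. In either case the constraints cutting out $V$ are strictly weaker than in the rational-real situation, and a Riemann--Roch / adjoint-ideal count yields $\dim V \ge 3$. I would then produce a nonzero $g\in V$, not proportional to $f$, that is nonnegative and dominated by $f$: the local analogue of Lemma \ref{lemma:difference_two_nonnegative_quadratics} applied at each real singular point shows that the Hessian of $f$ dominates that of such a $g$ up to a positive scalar, while $f>0$ off its real zero set provides (by compactness on the sphere) a global $\varepsilon>0$ with $f-\varepsilon g \ge 0$. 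The resulting decomposition $f=\varepsilon g + (f-\varepsilon g)$ contradicts extremality.

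The main obstacle will be pinning down the correct linear system $V$ at non-nodal real singularities. Kunert--Scheiderer dealt essentially with ordinary double points ($A_1$) and, by degeneration, one instance of $A_3$; a general rational plane sextic may carry any configuration of ADE or more complicated singularities whose delta-invariants sum to $10$. Matching the analytic condition (nonnegativity of $f-\varepsilon g$ near a real singular point) to the algebraic condition (membership of $g$ in the conductor ideal with the correct local multiplicity, i.e.\ having an equal-or-deeper analytic germ than $f$) is the technical core of the argument, and it must be done uniformly enough that the global Cayley--Bacharach / Riemann--Roch bookkeeping gives $\dim V\le 2$ precisely in the rational-real case and $\dim V\ge 3$ otherwise.
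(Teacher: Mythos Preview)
Your overall strategy---reduce extremality to a dimension count on a linear system of sextics sharing the real singular data of $f$, then identify the relevant face as a two-dimensional cone with a cubic squared on one edge and $f$ on the other---is exactly the paper's approach, and your necessity argument (perturb by a $g$ in the face, using local Hessian domination) matches the paper's one-line justification. Two concrete points in your plan need correction, however.

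First, your worry about ``any configuration of ADE or more complicated singularities'' is misplaced and is in fact the main simplification you are missing. The paper observes (Lemmas~\ref{lem:ternary6SOS} and~\ref{lem:HessianSOS}) that a nonnegative ternary sextic which is not a sum of squares has only finitely many real zeros, each an isolated double point at which the Hessian has rank at least $1$; together with the degree bound this forces every real singularity to be of type $A_1$, $A_3$, or $A_5$. So the ``technical core'' you anticipate collapses to just three local models.

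Second, your description of the linear system $V$ and the adjoint cubic is slightly off. For a \emph{rational} plane sextic the adjoint system of cubics through \emph{all} the singular points (with the appropriate infinitely-near conditions) is empty---that is precisely what genus zero means---so there is no cubic $c$ with $c^2\in V$ when $V$ encodes the full singular scheme of $f$. The paper instead \emph{drops one $\delta$-condition}: it fixes a singular point $P$ and considers the system $L$ of sextics having the same singularities as $f$ away from $P$ (or with $A_3$ relaxed to $A_1$ at $P$). Via explicit embedded resolution of the remaining $A_1/A_3/A_5$ points and an intersection-theory argument (the strict transform of the system has self-intersection $0$, so the associated map has one-dimensional image of degree $1$), it shows $L$ is exactly a pencil containing both $f$ and the square of a cubic $c$ through the remaining nine $\delta$-weighted conditions. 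Since $c(P)\neq 0$, one has $f-\varepsilon c^2<0$ near $P$ for every $\varepsilon>0$, pinning $f$ to the boundary of $L\cap\{\text{nonnegative}\}$. Your Cayley--Bacharach/Riemann--Roch route could presumably recover the same pencil, but the count must be carried out on $L$ rather than on your $V$.
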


We will do this in several steps along similar lines as Kunert and Scheiderer in their paper \cite{KS2018}.
\begin{lemma}\label{lem:ternary6SOS}
Let $f\in\R[x,y,z]$ be a nonnegative ternary sextic. 
\begin{itemize}
    \item If $f$ is reducible in $\R[x,y,z]$, then $f$ is a sum of squares.
    \item If $f$ is reducible in $\C[x,y,z]$, then $f$ is a sum of squares.
    \item If $f$ has infinitely many real zeros (that is zeros in $\R\P^2$), then $f$ is a sum of squares.
\end{itemize}
\end{lemma}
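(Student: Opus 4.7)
The overall plan is to reduce all three assertions to the first, which in turn rests on Hilbert's 1888 theorem that every nonnegative ternary quartic is a sum of squares, together with the spectral theorem for nonnegative quadratic forms and the fact that sums of squares are closed under multiplication via the identity $\bigl(\sum p_i^2\bigr)\bigl(\sum q_j^2\bigr) = \sum_{i,j}(p_i q_j)^2$.

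For the first bullet, I factor $f = g_1^{a_1} \cdots g_k^{a_k}$ into distinct real irreducibles in $\R[x,y,z]$. Every factor $g_i^{a_i}$ with $a_i$ even is a perfect square, hence SOS. For odd $a_i$, a local sign analysis at a smooth real point of the real zero set of $g_i$ where no other factor of $f$ vanishes shows that if $g_i$ changed sign transversally then so would $f$; hence $g_i$ is semi-definite, which in turn forces $\deg g_i$ to be even. Reducibility of $f$ together with $a_i \geq 1$ gives $\deg g_i < 6$, so $\deg g_i \in \{2,4\}$, and such semi-definite forms are sums of squares by the spectral theorem or by Hilbert's theorem. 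Thus every $g_i^{a_i}$ is SOS, and so is their product $f$.

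For the second bullet, if $f$ is already reducible over $\R[x,y,z]$ the first bullet applies. Otherwise $f$ is $\R$-irreducible but $\C$-reducible, which forces $f = c \cdot g \bar g$ for some $\C$-irreducible complex cubic $g$ with no real factor (else one would get a proper real factorization by pairing conjugates); writing $g = g_1 + i g_2$ with $g_j \in \R[x,y,z]$ gives $f = c(g_1^2 + g_2^2)$ with $c > 0$ by nonnegativity, a sum of two squares.

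For the third bullet, I show that infinitely many real zeros forces reducibility of $f$ over $\R$, reducing to the first bullet. An infinite real zero set in $\R\P^2$ contains a one-dimensional semi-algebraic component, whose Zariski closure in $\C\P^2$ has a conjugation-invariant irreducible component defined by some real irreducible polynomial $g$ with $g \mid f$. I then argue $g^2 \mid f$: write $f = g^a h$ with $\gcd(g,h)=1$, pick a smooth real point $p$ of $\{g=0\}$ on the one-dimensional real locus with $h(p) \neq 0$, and choose real local coordinates $(u,v)$ at $p$ with $g = v$; then $f = v^a h(u,v)$ near $p$ with $h(p) \neq 0$, and $f \geq 0$ forces $a$ to be even. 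The main technical obstacle is ensuring the existence of such a point $p$: one must avoid the finite singular locus of the irreducible complex curve $\{g=0\}$, avoid the finite intersection $\{g=h=0\}$, and have a real tangent line at $p$. The first two conditions cut out finitely many points, and the third holds for cofinitely many points of the one-dimensional real component, since that component is locally a real $C^\infty$ arc whose tangent at a generic point is automatically real.
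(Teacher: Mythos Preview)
Your argument is correct and self-contained; the paper, by contrast, does not prove this lemma but simply cites Lemma~3.1 and Proposition~3.2 of Choi--Lam--Reznick. Your route---reduce the first bullet to Hilbert's theorem on ternary quartics and the spectral theorem for quadratics, handle complex reducibility by pairing complex-conjugate factors, and derive real reducibility from an infinite real locus by extracting a real irreducible factor $g$ with $g^2\mid f$---is essentially the classical one underlying that reference, so there is no substantive divergence in method, only in presentation.

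Two minor remarks. In the first bullet, ``semi-definite'' also covers $g_i\le 0$, in which case $g_i^{a_i}$ with $a_i$ odd is not itself a sum of squares; you should replace each such $g_i$ by $-g_i$ and observe that nonnegativity of $f$ forces the resulting global sign to be positive, after which your product argument goes through. In the third bullet, your concern about the tangent line being real is unnecessary: at a real smooth point $p$ of the complex curve $\{g=0\}$ the gradient $\nabla g(p)$ is real because $g$ has real coefficients, so the tangent line is automatically real and the implicit function theorem gives real local coordinates $(u,v)$ with $g=v$ directly.
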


\begin{proof}
\cite[Lem. 3.1 and Prop. 3.2]{clrzbMATH03645183}
\end{proof}

\begin{lemma}\label{lem:HessianSOS}
Let $f$ be a nonnegative ternary sextic and suppose that there is a point $P = (x:y:z)\in \R\P^2$ such that the Hessian of $f$ at $P$ is the zero matrix. Then $f$ is a sum of squares. 
\end{lemma}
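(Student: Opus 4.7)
The plan is to exploit the high-order vanishing of $f$ at $P$ to rewrite $f$ as a quadratic in a single variable with binary-form coefficients, and then factor the resulting $2\times 2$ PSD polynomial matrix as a sum of squares. First I would leverage Euler's identity for homogeneous polynomials: $\nabla^2 f(P)=0$ implies (by $\sum_i x_i \partial_{ij}f = (\deg f - 1)\partial_j f$) that $\nabla f(P)=0$, and then, by Euler's identity applied once more, that $f(P)=0$. So $P$ is a singular point of $\V(f)$, and the Taylor expansion of $f$ at $P$ in any affine chart starts with terms of degree at least $3$. The leading degree-$3$ term is a homogeneous binary form of odd degree, which must be nonnegative as the leading term of a nonnegative function near $P$, hence identically zero. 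Thus $P$ has multiplicity at least $4$ on $\V(f)$.

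Next I would pick projective coordinates with $P=(0:0:1)$. The multiplicity-$4$ condition forces
\[ f(x,y,z) = g_4(x,y)\,z^2 + g_5(x,y)\,z + g_6(x,y) \]
with $g_k \in \R[x,y]$ a binary form of degree $k$. Viewed as a quadratic in $z$, the nonnegativity of $f$ on $\R^3$ is equivalent to the $2\times 2$ symmetric polynomial matrix
\[ M(x,y) = \begin{pmatrix} g_4 & g_5/2 \\ g_5/2 & g_6 \end{pmatrix} \]
being positive semidefinite for every $(x,y)\in \R^2$.

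To conclude, I would dehomogenize by setting $y=1$ and apply the classical theorem that every symmetric polynomial matrix over $\R[x]$ which is pointwise PSD on $\R$ factors as $B(x)^\top B(x)$ for some polynomial matrix $B(x)$. Rehomogenizing (the degree bounds $\deg g_4 \leq 4$ and $\deg g_6 \leq 6$ allow the two columns of $B$ to be taken to have entries of degrees $2$ and $3$ respectively) then yields
\[ f(x,y,z) = \sum_i \bigl( B_{i1}(x,y)\,z + B_{i2}(x,y) \bigr)^2, \]
proving that $f$ is a sum of squares.

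The main obstacle is the univariate PSD matrix polynomial factorization. For the $2\times 2$ case at hand, a hands-on proof starts from the identity $4 g_4 f = (2 g_4 z + g_5)^2 + (4 g_4 g_6 - g_5^2)$, where both $g_4$ and the negative discriminant $4 g_4 g_6 - g_5^2$ are nonnegative binary forms and hence sums of two squares; the delicate point is to control the divisibility of $g_5$ by the appropriate factors of $g_4$ so that the resulting square-root decomposition is polynomial rather than merely rational.
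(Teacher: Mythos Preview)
Your argument is correct and follows essentially the same route as the paper: after moving $P$ to $(0{:}0{:}1)$, both reduce to $f=g_4 z^2+g_5 z+g_6$ with binary $g_k$ of degree $k$ and conclude SOS from positive semidefiniteness of the $2\times 2$ matrix $M=\begin{psmallmatrix} g_4 & g_5/2\\ g_5/2 & g_6\end{psmallmatrix}$. You supply the Euler/odd-degree reason why the $z^3$-coefficient must vanish (the paper simply asserts the degree-$2$-in-$z$ shape), and for the last step you invoke the univariate PSD matrix factorization rather than the biform/Newton-polytope result the paper cites; these are equivalent endpoints of the same reduction.

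One small point to tighten: the standard univariate factorization theorem only bounds $\deg B\le 3$ (half the top degree of $M(x,1)$), not the column degrees separately, so your rehomogenization claim that the first column of $B$ can be taken of degree $\le 2$ needs a word of justification. The cleanest fix is to factor instead the homogeneous matrix $\operatorname{diag}(y,1)\,M(x,y)\,\operatorname{diag}(y,1)$, all of whose entries are binary sextics; writing it as $C^\top C$ with $\deg C=3$, the relation $\sum_i C_{i1}^2=y^2 g_4$ forces $y\mid C_{i1}$ (a sum of squares of real binary forms vanishes on $y=0$ only if each summand does), whence $B\coloneqq C\,\operatorname{diag}(y^{-1},1)$ is polynomial with column degrees $2$ and $3$ as you need.
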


\begin{proof}
We change coordinates and assume that $P = (0:0:1)$. If the Hessian matrix of $f$ at $P$ is the zero matrix, then $f$ is in fact a biform: it has degree $2$ in $z$ so that we can homogenize it in $z$ with a new variable $w$ and write it as
\[ f = 
\begin{pmatrix} z & w\end{pmatrix} 
\begin{pmatrix}
a_4 (x,y) & a_5(x,y) \\
a_5 (x,y) & a_6(x,y)
\end{pmatrix} 
\begin{pmatrix}
z \\ w
\end{pmatrix}
\]
where $a_i\in\R[x,y]$ is homogeneous of degree $i$. If such a biform is nonnegative, it is a sum of squares by \cite[Thm. 1.1]{GGMzbMATH06572970}. This follows by considering its Newton polytope. For a more detailed discussion of this case, see also \cite[Sec. 3]{zbMATH07102076}.
\end{proof}

The curve $\V(f) = \{x\in \C\P^2\colon f(x) = 0\}\subset \C\P^2$ defined by a nonnegative ternary sextic is singular at every real zero of $f$ in $\R\P^2$. 
If such a zero is an isolated double point and $f$ is not a sum of squares, there are only three possibilities, since the degree of $f$ is $6$ and the Hessian has rank at least $1$ at each of them by the previous Lemma \ref{lem:HessianSOS}. It is either an $A_1$-, $A_3$-, or $A_5$-singularity. Locally, around an $A_n$-singularity, such a curve is given by the equation $x^2 + y^{n+1} = 0$ (up to a real change of coordinates) because the polynomial is globally nonnegative. 

At each singularity, we are interested in its \emph{$\delta$-invariant} or \emph{order}, see \cite[Sec. 3.11]{CasasAlverozbMATH01497487}. The $\delta$-invariant of an $A_n$ singularity is the smallest integer greater than or equal to $n/2$, as \cite[Thm. 3.11.12]{CasasAlverozbMATH01497487} shows by the resolution of $A_n$-singularities discussed in \cite[Sec. 4.3]{Hartshorne}.

A \emph{pencil} is a $2$-dimensional linear space (which is the same as a $1$-dimensional projective space).
\begin{lemma}\label{lem:resolutionssing}
Let $f\in \C[x,y,z]$ be an irreducible ternary sextic. Suppose that the plane curve $\V(f)\subset \C\P^2$ is rational and that it has only singularities of type $A_1$, $A_3$, or $A_5$.
If it has a node $P$ (which is an $A_1$-singularity), then the set of ternary sextics that have the same singularities as $\V(f)$ outside of $P$ is a pencil of sextics containing the square of a cubic.
If it has only singularities of type $A_3$ or $A_5$, then it has a singularity $P$ of type $A_3$. In this case, the set of ternary sextics that have the same singularities as $\V(f)$ outside of $P$ and at $P$ a singularity of type $A_1$ is again a pencil of sextics containing the square of a cubic.
\end{lemma}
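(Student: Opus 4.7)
The plan is to exhibit the pencil as $\C f + \C g^2$ for a suitable cubic $g$, and then to use a Koszul / Max Noether argument for the regular sequence $(f, g^2)$ to identify this pencil with the linear system in the statement. Throughout, I use the genus formula $\sum_Q \delta_Q = p_a(C) - g(\widetilde{C}) = 10$ for the rational irreducible plane sextic $C = \V(f)$.

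For case 1 ($P$ a node), I would choose $g$ by imposing at each $A_{2k-1}$-singularity $Q \ne P$ the adjoint-type conditions: $g(Q) = 0$, and if $k \ge 2$, additionally that $g|_{\ell_Q}$ vanish to order at least $k$ at $Q$, where $\ell_Q$ is the double tangent of $f$ at $Q$. A local normal-form computation shows this amounts to $\delta_Q = k$ linear conditions on the coefficients of $g$, and yields $I_Q(f,g) = 2\delta_Q$, hence $I_Q(f,g^2) = 4\delta_Q$. The total number of conditions is $\sum_{Q \ne P}\delta_Q = 9$ out of ten, so $g$ exists; summing, $\sum_{Q \ne P} I_Q(f, g^2) = 36$ saturates the Bezout bound $\deg f \cdot \deg g^2$, so for a generic such $g$ there are no further intersections and $g(P) \ne 0$. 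Since $(f, g^2)$ is then a regular sequence, the Koszul complex on $\P^2$ gives $\dim (f, g^2)_6 = 2$, and Max Noether's fundamental theorem says $h \in (f, g^2)_6 = \C f + \C g^2$ if and only if $h \in (f, g^2)_Q$ in the local ring at each intersection point $Q$. The local ideal $(f, g^2)_Q$ cuts out a codimension-$4\delta_Q$ subspace, and a direct tangent-cone computation on $\lambda f + \mu g^2$ confirms that generic pencil members realize the prescribed $A_{2k-1}$-types at each non-$P$ point, so these local ideal conditions coincide with the equisingularity conditions defining the linear system $\mathcal L$. Hence $\mathcal L = \C f + \C g^2$.

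Case 2 ($P$ an $A_3$) is handled identically, with the modification that $g$ additionally vanishes at $P$ with tangent line distinct from the double tangent $\ell_P$ of $f$ at $P$ — an open and generically valid condition. The count of conditions on $g$ becomes $\sum_{Q \ne P}\delta_Q + 1 = 8 + 1 = 9$, so $g$ exists. The tangent cone of $g^2$ at $P$ is the square of a line distinct from $\ell_P$, so a generic combination $\lambda f + \mu g^2$ has non-degenerate tangent cone at $P$, realizing the prescribed $A_1$-singularity there. Bezout is again saturated ($I_P(f, g^2) = 4$ from the transverse meeting of $g$ with the two branches of $f$ at $P$, and $\sum_{Q \ne P} I_Q(f, g^2) = 32$), and the same Koszul / Max Noether argument identifies $\mathcal L$ with the pencil $\C f + \C g^2$.

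The hard part will be the local comparison between the equisingularity conditions defining $\mathcal L$ and the local ideal $(f,g^2)_Q$ — a case-by-case normal-form analysis at $A_1$, $A_3$, $A_5$ to confirm the two linear subspaces of degree-six polynomials coincide at each singular point. A secondary subtlety is that the uniqueness and genericity of the chosen cubic $g$ (in particular $g(P) \ne 0$ in case 1, and a transverse tangent at $P$ in case 2) may require a separate argument using the specific position of the singularities on the rational sextic, rather than following from the naive dimension count alone.
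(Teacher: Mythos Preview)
Your construction of the cubic $g$ and the B\'ezout count $\sum_Q I_Q(f,g^2)=36$ are fine, and they do give the inclusion $\C f+\C g^2\subseteq\mathcal L$. The gap is in the reverse inclusion. You assert that ``these local ideal conditions coincide with the equisingularity conditions defining $\mathcal L$'', but they do not. Already at a node $Q$ of $f$ (say $f=xy+\cdots$, $g=ax+by+\cdots$), the local ideal $(f,g^2)_Q$ has colength $I_Q(f,g^2)=4$ in $\mathcal O_{Q}$, whereas the condition ``$h$ has (at least) a double point at $Q$'' is $h\in\mathfrak m_Q^2$, of colength $3$. Concretely, $x^2\in\mathfrak m_Q^2\setminus(xy,(ax+by)^2)$. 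So Max Noether only tells you that a sextic lying in the \emph{strictly smaller} local ideals at every $Q$ is in $\C f+\C g^2$; it says nothing about a sextic that merely satisfies the equisingularity conditions. Your tangent-cone check on $\lambda f+\mu g^2$ only re-proves $\C f+\C g^2\subseteq\mathcal L$; it cannot upgrade the local ideal condition to the weaker double-point condition. The same discrepancy (colength $4\delta_Q$ versus the number of linear conditions imposed by the infinitely near base points) occurs at $A_3$ and $A_5$ points. Thus nothing in your outline bounds $\dim\mathcal L$ from above.

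The paper proves $\dim\mathcal L=2$ by an entirely different, intersection-theoretic route. It passes to the surface $X$ obtained by the nine successive blow-ups resolving the singularities (partially at $P$ in case~2), computes the intersection form on $\pic(X)$ case by case, and shows that the class $\wh C$ of the strict transform of $C$ has self-intersection $0$. Since the system $|\wh C|$ contains both $\wh C$ (irreducible) and the strict transform of $c^2$, it is base-point free; $D^2=0$ then forces the associated map $\phi_{|\wh C|}$ to have one-dimensional image, and irreducibility of $\wh C$ forces that image to be a line. Hence $|\wh C|$ is a pencil. This step---$D^2=0$ implies the linear system is a pencil---is the key fact your Max Noether/Koszul argument is missing, and there is no way to extract it from local ideal membership alone.
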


\begin{proof}
Since the curve $\V(f)$ is rational, the sum of the $\delta$-invariants of all its singularities is the genus of a smooth sextic in $\C\P^2$, which is $10$. If the curve has no singularities of type $A_1$, it must have $2$ of type $A_3$ since an $A_5$-singularity has $\delta$-invariant $3$ because the only partition of $10$ with parts restricted to $2$ and $3$ is $10 = 3+3+2+2$.

We show the other claims by resolving the singularities of $C = \V(f)$ except for $P$ and then considering the linear system of sextics as in the claim on the blown up surface to see that they form a pencil. 

We first discuss the resolutions of singularities for the three types of double points that a nonnegative ternary sextic can have. This is also discussed in \cite[Sec. V.3]{Hartshorne}.

The embedded resolution of a node $Q$ (an $A_1$-singularity) is given by the blow up $\pi\colon X\ratto \C\P^2$ of $\C\P^2$ in that point $Q$. The Picard group of this blow up is $\Z^2$ and is generated by the strict transform $\wh{L}$ of a line that does not contain $Q$ and the exceptional divisor. The intersection product is given by $\wh{L}.\wh{L} = 1$, $\wh{L}.E = 0$, and $E.E = -1$. The strict transform of $C$ is $\pi^*(6L) - 2E = 6\wh{L} - 2E$. The self-intersection of the strict transform of $C$ is therefore $32 = 6^2 + 4 \cdot(-1)$.

The embedded resolution of an $A_3$-singularity $Q$ is obtained in two steps. First, we blow up $Q$ to obtain $X_1\ratto \C\P^2$. Then the strict transform of $C = \V(f)$ still has an $A_1$-singularity on the exceptional divisor $E_1\subset X_1$. To resolve this, we blow up this singularity to obtain $X_2 \ratto X_1$ in which the strict transform of $C$ is smooth. Now the Picard group of $X_2$ is $\Z^3$. As generators, we pick the strict transform $\wh{L}$ of a line in $\C\P^2$ that does not contain $Q$, the strict transform $\wh{E_1}$ of the exceptional divisor $E_1\subset X_1$ with respect to the blow up $X_2\ratto X_1$, and the exceptional divisor $E_2$. Now the intersection product is given by the matrix
\[ 
S_3 = \begin{pmatrix}
1 & 0 & 0 \\
0 & -2 & 1 \\
0 & 1 & -1
\end{pmatrix}\]
with our chosen basis. This is because $\wh{E_1}$ is the same as $\pi_2^*(E_1) - E_2$. That class has self-intersection $-2$ and it intersects $E_2$ in one point.
The class of the strict transform of $C$ is $6 \wh{L} - 2\wh{E_1} - 4 E_2$. So its self-intersection is $28 = (6,-2,-4) S_3 (6,-2,-4)^t$.

For the embedded resolution of an $A_5$-singularity $Q\in \C\P^2$, we have to blow up three times. First, we blow up $\C\P^2$ in $Q$ again and write this as $\pi_1\colon X_1 \ratto \C\P^2$. Now, the strict transform of $C$ has an $A_3$-singularity on the exceptional divisor that we blow up next to get $\pi_2\colon X_2 \ratto X_1$. The strict transform of $C$ on $X_2$ still has a singularity on the exceptional divisor, which is a node ($A_1$-singularity) this time. A last blow up $\pi_3 \colon X_3 \to X_2$ at this point finally gives a smooth strict transform of $C$. The Picard group of $X_3$ is $\Z^4$. As a basis, we choose the strict transform $\wh{L}$ of a line in $\C\P^2$ not containing $Q$, the strict transform $\wh{E_1}$ of the exceptional divisor $E_1\subset X_1$, the strict transform $\wh{E_2}$ of the exceptional divisor $E_2\subset X_2$, and the exceptional divisor $E_3$. The Picard group of $X_2$ the same as in the previous case. The center of the last blow up $\pi_3$ lies on $E_2$ but it is not the intersection point of $E_2$ and the strict transform of $E_1$ in $X_2$ (see \cite[Cor. V.3.7 and Exm. V.3.9.5]{Hartshorne}). So the intersection matrix of $\pic(X_3)$ restricted to the sublattice generated by $\wh{L}$, $\wh{E_1}$ and $\wh{E_2}$ is the same as before. For the same reason as before, we have $\pi_3^*(E_2) = \wh{E_2} + E_3$ so that in total, the intersection matrix in this case is
\[
S_4 = \begin{pmatrix}
1 & 0 & 0 & 0 \\
0 & -2 & 1 & 0 \\
0 & 1 & -2 & 1 \\
0 & 0 & 1 & -1
\end{pmatrix}.
\]
The class of the strict transform of $C$ in $\pic(X_3)$ is $6\wh{L} - 2\wh{E_1} - 4\wh{E_2} - 6 E_3$. Its self-intersection is $24 = (6,-2,-4,-6) S_4 (6,-2,-4,-6)^T$ this time.

In the case that $C = \V(f)$ has a node (which has $\delta$-invariant $1$), the sum of the $\delta$-invariants of the singularities that we resolve is $9$. For instance, this could be $9$ nodes ($A_1$-singularities) or $7$ nodes and one $A_5$-singularity and so on. Either way, the self-intersection of the class of the strict transform of $C$ in the resolution of singularities is $0$. This follows from the above cases: In every case, the self-intersection of the strict transform drops by $4\delta$ for every singularity. Since the $\delta$-invariants sum to $9$, the self-intersection number drops from $36$ by $4*9 = 36$.

The same argument applies if $C$ has no node and we pick a singularity of type $A_3$ that we only partially resolve by one blow-up. The strict transforms of sextics that we consider in the claim under such a blow-up is also $6\wh{L} - 2E$, the same as in the case of a node. Therefore, the self-intersection of the class of sextics that have the same singularities as $\V(f)$ outside of the $A_3$-singularity $P$ and a node at $P$ still has self intersection $0$ on the appropriate embedded resolution of singularities.

In either case, the linear system on the blow-up contains the strict transform of the square of a cubic. The cubic is given by the following $9$ linear conditions: For every $A_1$-singularity of $\V(f)$ (except, possibly, for $P$), we require the cubic to vanish there. For every $A_3$-singularity (except, possibly, for $P$), we require that the cubic vanishes and that the gradient is orthogonal to the kernel of the Hessian of $f$ at that point. This gives (at most) two linear conditions. At an $A_5$ singularity, we require that the point be an inflection point of the cubic with a gradient that is orthogonal to the kernel of the Hessian. This gives (at most) $3$ linear conditions on the cubic. If $P$ is an $A_3$-singularity, then the requirement for the cubic is to only vanish at $P$ (because we want the square to have a double point at $P$ then). Together, this gives at most $9$ linear conditions so that there is always the square of a cubic in the linear system that we consider.

On the partial resolution of singularities $X$ obtained by $9$ successive blow-ups, the strict transforms of the square of the cubic and of the irreducible sextic do not intersect since they cannot have a component in common and the self-intersection is $0$ by the generalization of B\'ezout's Theorem to surfaces, see \cite[Prop. V.1.4]{Hartshorne}. This linear system is therefore base-point free.

So we consider the divisor class of the strict transform of $C$ in the partial resolution of singularities $X$ obtained by $9$ successive blow-ups. The associated line bundle on $X$ has at least two global sections, namely one corresponding to the strict transform of $C$ and the other one corresponding to the strict transform of the cubic squared. 
We consider the map associated to this linear system. Since the self-intersection of the divisors in this system is $0$, the map cannot be generically finite. So the image has dimension $1$. The degree of this curve is a lower bound for the number of irreducible components of a general divisor in the linear system $|\wh{C}|$ and must therefore be $1$ because $\wh{C}$ is irreducible. So the linear system of such sextics is a pencil as claimed.
\end{proof}

\begin{theorem}\label{thm:exteme_ternary_sextic_conditions}
Let $f\in\R[x,y,z]_6$ be a nonnegative ternary sextic that is not a sum of squares. Then $f$ is an extreme ray of the cone of nonnegative polynomials if and only if the following conditions hold.
\begin{enumerate}
    \item $f$ is irreducible in $\C[x,y,z]$.
    \item All singularities of $\V(f)\subset \C\P^2$ are real.
    \item The curve $\V(f)\subset \C\P^2$ is rational, i.e.~the sum of the $\delta$-invariants over all singularities of $\V(f)$ is $10$.
\end{enumerate}
\end{theorem}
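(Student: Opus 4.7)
I would split the proof into the two implications.

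For ($\Leftarrow$): Assume (1), (2), (3) hold, and let $K$ be the minimal face of the cone of nonnegative ternary sextics containing $f$ in its relative interior. By the characterization of such a face, every $g \in K$ vanishes at each real zero $R$ of $f$ and satisfies $\ker f''(R) \subseteq \ker g''(R)$. By (2) all singularities of $\V(f)$ are real; by (3) their $\delta$-invariants sum to $10$. Pick a singularity $P$ of $\V(f)$ as in Lemma \ref{lem:resolutionssing}: a node if one exists, otherwise an $A_3$-singularity. The lemma then provides a $2$-dimensional pencil $\{\alpha f + \beta q^2\}$ of sextics matching the singularity structure of $\V(f)$ outside $P$, where $q$ is a real cubic; the $9$ linear conditions pinning down $q$ do not force $q(P)=0$, so $q^2(P) \neq 0$. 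Therefore every $g=\alpha f + \beta q^2 \in K$ satisfies $\beta q^2(P)=g(P)=0$, forcing $\beta=0$ and $g\in \R f$. Hence $K=\R_+ f$ and $f$ is extreme.

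For ($\Rightarrow$): I argue the contrapositive. Condition (1) is automatic, since any nonnegative ternary sextic that is reducible over $\C$ is SOS by Lemma \ref{lem:ternary6SOS}, contradicting the hypothesis that $f$ is not SOS. Now suppose (2) or (3) fails, so that $\V(f)$ either has a conjugate pair of non-real singularities or the sum of $\delta$-invariants over the real singularities of $\V(f)$ is strictly less than $10$. In either case the same counting that gave $9$ real linear conditions on the cubic in the proof of Lemma \ref{lem:resolutionssing} drops below $9$: a complex conjugate pair of singularities contributes no usable real conditions, and missing $\delta$-invariants also reduce the count. Consequently the space of real cubics $q$ for which $q^2$ matches the singularity structure of $f$ at every \emph{real} singularity has dimension at least $2$, and we may choose $q$ so that $q^2$ is linearly independent from $f$. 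The plan is then to show that $f - \varepsilon q^2 \geq 0$ for sufficiently small $\varepsilon > 0$, yielding the nontrivial decomposition $f = (f-\varepsilon q^2) + \varepsilon q^2$ and hence that $f$ is not extreme.

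The main technical obstacle is the nonnegativity of $f-\varepsilon q^2$. At each real zero $R$ of $f$, the construction of $q$ ensures $q^2(R)=0$ and $\ker f''(R) \subseteq \ker (q^2)''(R)$, while $f$ has positive-definite growth transverse to $\ker f''(R)$ dictated by its $A_1$, $A_3$, or $A_5$ singularity type. A Taylor expansion at $R$ together with Lemma \ref{lemma:difference_two_nonnegative_quadratics}, and a higher-order analog controlling the quartic or sextic tails at $A_3$ and $A_5$ points, yields $f-\varepsilon q^2 \geq 0$ in a neighborhood of $R$ for all $\varepsilon$ below a uniform threshold. Away from $\V(f) \cap \R\P^2$, compactness of $\R\P^2$ provides a uniform positive lower bound on $f$, so $f-\varepsilon q^2 > 0$ there for small $\varepsilon$. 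Patching these local and global estimates produces the desired $\varepsilon$; the most delicate point is obtaining uniform control at the $A_5$ singularities, where the Hessian kernel is $2$-dimensional and positivity is governed by the sextic-order terms of the local expansion, and this step follows the blow-up and normalization analysis of Kunert and Scheiderer \cite{KS2018}.
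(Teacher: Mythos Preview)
Your $(\Leftarrow)$ argument has a concrete gap in the case where $\V(f)$ has no nodes and $P$ must be chosen as an $A_3$-singularity. In that case, one of the nine linear conditions on the cubic $q$ in Lemma~\ref{lem:resolutionssing} \emph{is} the condition $q(P)=0$: the pencil consists of sextics with an $A_1$-singularity at $P$, and the proof of the lemma explicitly requires the cubic to vanish at $P$ so that $q^2$ has a double point there. Hence $q^2(P)=0$, and your deduction of $\beta=0$ from $g(P)=0$ collapses. The paper's argument avoids this by not evaluating at $P$ but instead observing that $f-\epsilon c^2$ becomes negative at points \emph{near} $P$: when $P$ is a node this is because $c^2(P)>0$, while when $P$ is an $A_3$-singularity it is because $c^2$ has only an $A_1$ at $P$ with tangent direction transverse to the $A_3$-direction of $f$, so in the local model $x^2+y^4$ one subtracts a term with a nonzero $y^2$-contribution. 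A related weakness is your claim that $K$ lies in the pencil: you justify this only via the second-order condition $\ker f''(R)\subseteq\ker g''(R)$, but the pencil is cut out by higher-order tangency data at $A_3$- and $A_5$-points, and you would need to argue (from $f\pm\epsilon g\ge0$) that membership in $K$ forces those as well.

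Your $(\Rightarrow)$ direction is more explicit than the paper's brief perturbation sketch, and taking $g=q^2$ is a sensible concretization. Two small corrections: you only need the space of admissible cubics to have dimension at least $1$ (not $2$), which is what the condition count actually gives when the real $\delta$-invariants sum to at most $9$; and at an $A_5$-singularity you must impose the full inflection condition on $q$ (three linear conditions, as in the proof of Lemma~\ref{lem:resolutionssing}), not just the Hessian-kernel condition, since otherwise $q$ contributes a $y^2$-term in the local model $x^2+y^6$ and $f-\epsilon q^2$ acquires a negative $y^4$-coefficient.
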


\begin{proof}
First, let $f$ be an extreme ray of the cone of nonnegative ternary sextics. Lemma \ref{lem:ternary6SOS} implies that $f$ is irreducible in $\C[x,y,z]$ and that the set of real zeros of $f$ is finite. Since $f$ is extremal, it must be uniquely determined by its real singularities. Therefore, they must all (in the sense of claim 3) be real. Indeed, if the form had fewer real zeros, there would be another ternary sextic $g$ with the same real zeros and by perturbation $f \pm \epsilon g$ would still be nonnegative. 

Conversely, suppose that $f\in\C[x,y.z]$ is irreducible, $\V(f)\subset \C\P^2$ is rational and all singularities of $\V(f)$ are real. Consider the pencil $L$ of sextics as in Lemma \ref{lem:resolutionssing} with a choice of point $P$. The form $f$ is an element of this pencil and so is the square $c^2$ of a real cubic $c\in\R[x,y,z]$ (determined as in the proof of Lemma \ref{lem:resolutionssing} by the linear conditions that $c$ vanishes at every $A_1$-singularity of $\V(f)$ except for $P$, vanishes at every $A_3$-singularity of $\V(f)$ with a determined tangent direction given by the kernel of the Hessian of $f$ at that point, and an inflection point at every $A_5$-singularity of $\V(f)$ with determined tangent direction). This shows that the face of the cone of nonnegative ternary sextics contained in the $2$-dimensional space $L$ is a cone spanned by $c^2$ and $f$. Therefore, $f$ spans an extreme ray. Indeed, $f$ is extremal because any sufficiently small perturbation of $f$ in direction $-c^2$ will make it negative for some points close enough to $P$.
\end{proof}

\section{Counterexample}\label{sec:counterexample}
In this section we prove Theorem \ref{thm:main-ext_not_ext_det}, which we recast as follows:

\begin{conjecture}[\cite{HarutyunyanMiltonCPAM} Conjecture 2.8, \cite{harutyunyan2021extreme} Conjecture 2.7]\label{conj:counterexample}
Let $F\in \mathrm{PB}_{3}$ be a nonnegative biquadratic.  If $F$ is a weak extremal, then the determinant of the acoustic tensor, $\det T(x)$, is an extremal polynomial different from a perfect square.
\end{conjecture}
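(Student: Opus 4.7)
The plan is constructive: I would exhibit an explicit weak extremal biquadratic $F \in \mathrm{PB}_3$ drawn from the family studied by Buckley--\v{S}ivic \cite{BS2020} and verify, via the generalized Kunert--Scheiderer criterion (Theorem~\ref{thm:exteme_ternary_sextic_conditions}), that the ternary sextic $f(x) = \det T(x)$ fails to be an extreme ray of the cone of nonnegative ternary sextics (or else is a perfect square).

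First I would fix a concrete strong extremal biquadratic $F \in \mathrm{PB}_3$ from \cite{BS2020} whose real zero set $\V(F) \subset \R\P^2 \times \R\P^2$ is known explicitly, along with the Hessian kernels at each zero. By Theorem~\ref{thm:main}, such an $F$ is in particular a weak extremal, so the hypothesis of Conjecture~\ref{conj:counterexample} is satisfied. Next, writing $F(x,y) = y^T T(x) y$, I would form the ternary sextic $f(x) = \det T(x)$; it is automatically nonnegative because $T(x)$ is positive semidefinite at every real $x$.

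The main step is to determine the singular locus of $\V(f) \subset \C\P^2$ and to show that at least one of the three conditions of Theorem~\ref{thm:exteme_ternary_sextic_conditions} is violated. The real singularities of $\V(f)$ arise by projection: each zero $(x_0, y_0)$ of $F$ contributes a singular point $x_0$ whose type ($A_1$, $A_3$, or $A_5$) can be read off from the rank of $F''(x_0, y_0)$ via Lemma~\ref{lem:roots}. Summing the $\delta$-invariants of these real singularities, I expect one of three outcomes: (i) the total is strictly less than $10$, so $\V(f)$ is not rational and condition (3) of the theorem fails; (ii) $\V(f)$ contains complex-conjugate pairs of singularities (detectable by primary decomposition of the Jacobian ideal in a computer algebra system), violating condition (2); or (iii) $f$ factors nontrivially over $\C$, for example as a product of a quartic and a quadratic or as the square of a cubic, violating condition (1). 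In any of these cases $f$ is not an extreme ray, and in the perfect-square subcase the conjecture already fails by its ``different from a perfect square'' clause.

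The hard part will be locating a Buckley--\v{S}ivic example for which the computation is both tractable and unambiguous. Extracting the complete complex singular structure of a given sextic requires care: one must rule out hidden singular points not arising from $\V(F)$, which typically demands a Gr\"obner basis or resultant calculation, and one must verify that the chosen biquadratic is indeed extreme (and not merely weak extremal) so that Theorem~\ref{thm:main} applies in the direction we need. Once the full singular locus is described, the classification from Section~\ref{sec:ternary_sextics} delivers the failure of extremality of $\det T(x)$, and Theorem~\ref{thm:main-ext_not_ext_det} follows.
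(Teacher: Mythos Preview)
Your plan matches the paper's own argument: pick a Buckley--\v{S}ivic strong extremal with nine explicit zeros, compute $\det T(x)$, and certify that this sextic is not extreme via Kunert--Scheiderer. Two small points. First, you invoke Theorem~\ref{thm:main} to pass from strong extremal to weak extremal, but that theorem goes the other way; the implication you need (strong extremal and not a square $\Rightarrow$ weak extremal) is the trivial direction and needs no machinery. Second, the paper avoids the full analysis of the complex singular locus you sketch: for the specific choice $p=\tfrac12$, $q=\tfrac34$, one checks directly that $\det T(x)$ has exactly nine real zeros, each with rank-$2$ Hessian, and then the \emph{original} Kunert--Scheiderer statement (Theorem~\ref{theorem:ternary_sextic_nine_zeros}) already forces non-extremality, since the unique extreme sextic through those nine points must acquire a tenth zero. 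Your route through Theorem~\ref{thm:exteme_ternary_sextic_conditions} would also work (the nine $A_1$-singularities give $\sum\delta=9<10$, so the curve is not rational), but it is more than is needed here.
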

Here we will construct a counterexample to Conjecture \ref{conj:counterexample} coming from \cite{BS2020}, which is even strong extremal (and in particular weak extremal).
Let $f_{p,q}(x,y)$ be a nonnegative biquadratic with the following 9 zeros:
    $$S = \{(1, 1, 1; 1, 1, 1),(1, 1, -1; 1, 1, -1),(1, -1, 1; 1, -1, 1),(-1, 1, 1; -1, 1, 1),(1, p, 0; q, 1, 0),$$
$$    (1, -p, 0; -q, 1, 0),(0, 1, q; 0, p, 1), (0, 1,-q; 0,-p, 1), (0, 0, 1; 1, 0, 0)\}.$$

\begin{theorem}[\cite{BS2020} Theorem 7]\label{theorem:nine_zeros_extremal}
For $(p,q) \in [0,\frac{1}{\sqrt{2}}]\times [0,\sqrt{2}]$ such that $2p>q$ and $(p^2-1)^2q^2 \geq p^2$,
$f_{p,q}(x,y)$ is not a square, has exactly these nine zeros, and is a strong extremal.
\end{theorem}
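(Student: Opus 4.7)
The plan is to combine the two main structural results of the paper --- Theorem \ref{thm:char_we} characterizing weak extremals via $\dim L_F = nm$ and Theorem \ref{thm:WE_nine_zeros} upgrading weak to strong extremality in the presence of nine distinct real zeros --- so that the argument reduces to (i) exhibiting a nonnegative biquadratic $f_{p,q}$ vanishing on the nine prescribed points, (ii) confirming that these are its only zeros, and (iii) checking that the corresponding nine rank-one matrices $x_i y_i^T$ are linearly independent in $\R^{3\times 3}$. Together (ii) and (iii) will then yield both strong extremality and non-squareness.

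For (i), I would write the general biquadratic in $y$-matrix form $F(x,y) = y^T T(x) y$, impose the nine linear vanishing conditions $F(x_i,y_i) = 0$ (together with $\nabla F(x_i,y_i)=0$, forced by nonnegativity), and parametrize the resulting affine subspace of candidate biquadratics through $S$ after quotienting by null Lagrangians (Lemma \ref{lem:sigma}). By direct analogy with the pencil picture of Kunert--Scheiderer (Theorem \ref{theorem:ternary_sextic_nine_zeros}) and Lemma \ref{lem:resolutionssing}, I expect the face of $\mathrm{PB}_3$ through $S$ to be two-dimensional, spanned by one square and one non-square extremal $f_{p,q}$. The inequalities $(p,q)\in [0,1/\sqrt 2]\times [0,\sqrt 2]$, $2p > q$, $(p^2-1)^2 q^2 \geq p^2$ should emerge as exactly the conditions on the two affine parameters ensuring global nonnegativity together with the absence of any tenth zero.

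For (iii), I would assemble the $9\times 9$ matrix whose columns are the vectorized rank-one matrices $x_i y_i^T$ and show that its determinant is a polynomial in $p,q$ that does not vanish on the prescribed region. The nine points have exploitable structure: the four sign-matched diagonal points $(\pm 1,\pm 1,\pm 1;\pm 1,\pm 1,\pm 1)$ contribute four sign-flipped copies of $\mathbf{1}\mathbf{1}^T$; the point $(e_3;e_1)$ contributes the single matrix unit $E_{31}$; and the four remaining mixed points involving $p$ and $q$ fill in the orthogonal directions. Once $\dim L_F = 9$ is verified, Theorem \ref{thm:char_we} gives weak extremality, and Theorem \ref{thm:WE_nine_zeros} then immediately promotes this to strong extremality.

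The main obstacle I anticipate is step (ii): certifying that there are \emph{exactly} nine zeros and no accidental additional ones. The natural approach is to stratify $\P^2\times \P^2$ by which coordinates vanish and to analyze the restriction of $f_{p,q}$ to each stratum, in each case appealing to Lemma \ref{lem:quarez_same_component} to exclude any two zeros sharing the same $x$- or $y$-projection (which would otherwise force $f_{p,q}$ into the sum-of-squares cone, precluding weak extremality). The sharp inequality $(p^2-1)^2 q^2 \geq p^2$ will plausibly arise as the threshold at which two zeros would merge into a higher singularity, and $2p > q$ as the threshold preventing a tenth real zero from entering a specific coordinate chart. Once (ii) is established the non-squareness claim is free: any square of a bilinear form vanishes on a codimension-one subvariety of $\P^2\times \P^2$ and hence has infinitely many zeros, contradicting the finiteness of $S$.
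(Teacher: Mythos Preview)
The paper does not prove this statement at all: Theorem \ref{theorem:nine_zeros_extremal} is quoted verbatim from Buckley--\v{S}ivic \cite{BS2020} and used only as input for the counterexample in Section \ref{sec:counterexample}. There is therefore no in-paper proof to compare your proposal against.

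That said, your idea of recovering the result from the paper's own Theorems \ref{thm:char_we} and \ref{thm:WE_nine_zeros} is not circular (those results are established independently of Section \ref{sec:counterexample}), and step (iii) is sound: linear independence of the nine rank-one matrices $x_iy_i^{T}$ forces $\dim L_{f_{p,q}}=9$, hence weak extremality by Theorem \ref{thm:char_we}, hence $f_{p,q}$ is not a sum of squares and a fortiori not a perfect square.

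The real gaps are in steps (i) and (ii). For (i), you have not produced $f_{p,q}$: the pencil picture you invoke from Lemma \ref{lem:resolutionssing} concerns ternary sextics on blow-ups of $\C\P^2$, not biquadratics on $\P^2\times\P^2$, so the analogy does not by itself yield a two-dimensional face of $\mathrm{PB}_3$ through $S$ or single out a non-square member. For (ii), Lemma \ref{lem:quarez_same_component} only excludes extra zeros whose $x$- or $y$-projection coincides with one already in $S$; it gives no obstruction to a tenth zero with fresh projections in both factors, and your stratification sketch does not handle that case. Pinning down \emph{exactly} nine zeros is precisely where the numerical inequalities on $(p,q)$ must do work, and it is the hypothesis you need in order to invoke Theorem \ref{thm:WE_nine_zeros} (whose proof, via Theorem \ref{theorem:ternary_sextic_nine_zeros}, is tailored to the nine-point configuration). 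As written, the proposal leaves this step open.
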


From this family of strong extremals, we fix $p=1/2$ and $q=3/4$. This satisfies Theorem \ref{theorem:nine_zeros_extremal} and is thus a strong extremal different from a square.  The associated $T(x)$ matrix is (a postive scaling of)

$$T(x) = \left(\begin{array}{rrr}
100 \, x_{0}^{2} + 192 \, x_{1}^{2} & -222 \, x_{0} x_{1} & -70 \, x_{0} x_{2} \\
-222 \, x_{0} x_{1} & 27 \, x_{0}^{2} + 225 \, x_{1}^{2} + 192 \, x_{2}^{2} & -222 \, x_{1} x_{2} \\
-70 \, x_{0} x_{2} & -222 \, x_{1} x_{2} & 165 \, x_{0}^{2} + 27 \, x_{1}^{2} + 100 \, x_{2}^{2}
\end{array}\right).$$

It turns out that $\det T(x)\in\R[x_0,x_1,x_2]$ is not extremal providing a counterexample to Conjecture \ref{conj:counterexample}.  This follows from work by Kunert and Scheiderer summarized in Theorem \ref{theorem:ternary_sextic_nine_zeros}.
Indeed, $\det T(x)$ is a ternary sextic which vanishes on exactly nine zeros by Theorem \ref{theorem:nine_zeros_extremal}.
However, at each of the zeros, its $3\times 3$ Hessian has (maximal) rank 2 -- it cannot be less because of homogeneity and the vanishing of the gradient by Euler's identity. 
Since $\det T(x)$ has no tenth root, it cannot be extremal by Theorem \ref{theorem:ternary_sextic_nine_zeros}.

\appendix

\section{Bounds for effective properties of composites}\label{sec:composites}

First, we briefly explain what are the effective properties of composite materials and what kind of bounds  we are interested in. Second, we outline the translation method for obtaining such bounds. Then we show that translations by quasiconvex quadratic forms give good bounds from below. Lastly, we show that, among quasiconvex translations, the weak extremal ones give the best bounds.

We largely follow the presentation from Milton's monograph \cite{MiltonBook}, see Chapters 1, 12, 13, 21, 24, and 25 therein. We will consider a two-phase (elastic) energy of the form 
 $$
    E[u]=\int_{\Omega} \langle \nabla u(x),\CC(x)\nabla u(x)\rangle \dif x,\quad u\colon\Omega\rightarrow\R^n
    $$
    where $\Omega\subset\R^n$, $n=2,3$, represents an elastic body, say, a ball, and
    $\CC(x)=\CC_1\mathbf{1}_{\Omega_1}(x)+\CC_2\mathbf{1}_{\Omega_2}(x)$. Here $\CC_i$ are 4-tensors that induce positive definite quadratic forms on $\R^{n\times n}$ and $\{\Omega_i\}_{i=1,2}$ partition $\Omega$. Then $\CC(x)$ represents the microscopic \emph{stiffness tensor} that links the strain and stress of the deformation. 
    We are interested in the macroscopic description of the stiffness matrix, given by
    \begin{equation}\label{eq:eff}
      \langle \xi,\CC_*\xi \rangle\coloneqq \inf_{\varphi|_{\partial\Omega}=0} \int_{\Omega} \langle \xi+\nabla\varphi(x),\CC(x) (\xi+\nabla \varphi(x)) \rangle\dif x \quad \text{for }\xi\in\R^{n\times n},
    \end{equation}
    where the infimum is taken over $\varphi\in C_c^\infty(\Omega,\R^n)$. We will refer to $\CC_*$ as the \emph{effective} stiffness tensor of $\CC$.

While the computation of $\CC_*$ is, in general, very difficult and heavily depends on the shape of the two phases $\Omega_i$, finding optimal bounds for this quantity is very helpful for practical purposes \cite[Ch. 21]{MiltonBook}. For instance, a very simple bound is obtained by simply taking $\varphi=0$ in \eqref{eq:eff}, leading to the inequality
\begin{equation}\label{eq:AM}
\CC_*\leq \int_{\Omega} \CC(x)\dif x,
\end{equation}
which is understood in the sense of quadratic forms on $\R^{n\times n}$. A slightly more involved calculation gives the harmonic mean bound from below
\begin{equation}\label{eq:HM}
    \CC_*\geq \left(\frac{1}{|\Omega|}\int_{\Omega}\CC(x)^{-1}\dif x\right)^{-1}=\left(\frac{|\Omega_1|}{|\Omega|}\CC_1^{-1}+\frac{|\Omega_2|}{|\Omega|}\CC_2^{-1}\right)^{-1}.
\end{equation}
This can be deduced from the arithmetic mean bound \eqref{eq:AM} by an elementary duality argument, see \cite[Ch. 13.1]{MiltonBook} or \cite{MiltonCPAM}. In fact, the harmonic mean bound can be traced back to work done in Elasticity Theory in the early 20th century \cite{Reuss,Voigt}.

If the exact computation of $\CC_*$ depends on the geometry of $\Omega_i$, the bounds in \eqref{eq:AM} and \eqref{eq:HM} depend only on their \emph{volume fractions}. This motivates the practical question of optimality of these bounds: do there exist shapes of $\Omega_i$ with fixed volume fraction such that the bounds are attained? For an explicit example see, for instance \cite[Ch. 22.4]{MiltonBook}. In many examples, optimal bounds give certain correlations between various physical (microscopic and/or macroscopic) quantities that arise in practical problems.

To improve the basic bound in \eqref{eq:HM}, we will sketch some considerations related to the \emph{translation method}, introduced independently by Murat and Tartar \cite{MuratTartar,tartar1979,tartar1985} and Lurie--Cherkaev \cite{lurie1984exact}. By a remarkable observation of Milton \cite{MiltonCPAM}, there is a correspondence between this method and the variational principles of Hashin--Shtrikman \cite{hashin1962variational,hashin1963variational}, which were used to derive optimal bounds in conductivity and linear elasticity that have long been the benchmark against which many experiments are compared, see \cite[Ch. 23]{MiltonBook}. 

We now briefly describe how the translation method can be used in our context. The basic idea is to consider a constant tensor $\TT$, the \emph{translation}, which induces a quadratic form on $\R^{n\times n}$, and to compare $(\CC(x)-\TT)_*$ with $\CC_*-\TT$. The latter is easily expressed by \eqref{eq:eff} for $\xi\in\R^{n\times n}$ by 
$$
\langle \xi,(\CC_*-\TT)\xi \rangle= \inf_{\varphi|_{\partial\Omega}=0} \int_{\Omega} \langle \xi+\nabla\varphi(x),\CC(x) (\xi+\nabla \varphi(x)) \rangle\dif x-\langle \xi,\TT\xi \rangle.
$$
Now we make the crucial observation that, if the translation we use is \emph{quasiconvex}, we have that 
\begin{align*}
\langle \xi,(\CC_*-\TT)\xi \rangle&\geq \inf_{\varphi|_{\partial\Omega}=0} \int_{\Omega} \langle \xi+\nabla\varphi(x),\CC(x) (\xi+\nabla \varphi(x)) \rangle-\langle \xi+\nabla\varphi(x),\TT (\xi+\nabla \varphi(x)) \rangle\dif x\\
&= \inf_{\varphi|_{\partial\Omega}=0} \int_{\Omega} \langle \xi+\nabla\varphi(x),(\CC(x)-\TT) (\xi+\nabla \varphi(x)) \rangle\dif x\\
&=\langle \xi,(\CC-\TT)_*\xi \rangle.
\end{align*}
Since the inequality holds for all $\xi\in\R^{n\times n}$, we can infer that
\begin{equation}\label{eq:translation_method}
\CC_*\geq \TT+(\CC-\TT)_*.
\end{equation}
Of course, determining the effective stiffness tensor $\CC-\TT$ is no easier than determining the effective stiffness tensor $\CC$ , but here we can use the basic bound \eqref{eq:HM} to obtain a computable formula
\begin{equation}\label{eq:translation_bound}
    \CC_*\geq \TT+\left(\frac{1}{|\Omega|}\int_{\Omega}(\CC(x)-\TT)^{-1}\dif x\right)^{-1},
\end{equation}
which can then be studied for tensors $\TT$ inducing quasiconvex quadratic forms such that the inverse under the integral is well defined, i.e.,
\begin{equation}\label{eq:restriction}
\CC(x)-\TT\geq 0\quad\text{for all }x\in\Omega.
\end{equation}
The difficult question then becomes choosing such tensors $\TT$ for which there exist geometries $\{\Omega_i\}_{i=1,2}$ such that the bounds are attained. This is well beyond the scope of this outline.
 
We have thus showed how quasiconvex quadratic forms (hence \textbf{positive biquadratics}, see Section \ref{sec:prel}) enter the computation of important optimal bounds in the Theory of Composites. 
To conclude this presentation, we will show that \textbf{weak extremal} quasiconvex quadratic forms (see Definition \ref{def:WE}) are the right building blocks to investigate the bounds \eqref{eq:translation_method}. 

To this end, we will show that translation by a nonnegative semidefinite quadratic form $\SSS$ can only make the right hand side of \eqref{eq:translation_method} smaller, hence leads to a worse bound from below. To see that this is the case, we write $\tilde{\CC}=\CC-\TT$ and derive
\begin{align*}
\langle \xi,(\tilde\CC-\SSS)_*\xi \rangle&= \inf_{\varphi|_{\partial\Omega}=0} \int_{\Omega} \langle \xi+\nabla\varphi(x),\tilde\CC(x) (\xi+\nabla \varphi(x)) \rangle-\langle \xi+\nabla\varphi(x),\SSS (\xi+\nabla \varphi(x)) \rangle\dif x\\
&\leq \inf_{\varphi|_{\partial\Omega}=0} \int_{\Omega} \langle \xi+\nabla\varphi(x),\tilde\CC(x) (\xi+\nabla \varphi(x)) \rangle\dif x-\langle \xi,\SSS\xi \rangle\\
&=\langle \xi,(\tilde\CC_*-\SSS)\xi \rangle,
\end{align*}
where we used Jensen's inequality for the convex quadratic form induced by $\SSS$, see Lemma \ref{lem:convex}. Therefore
\begin{align*}
    (\CC-\TT)_*-\SSS\geq (\CC-\TT-\SSS)_*,
\end{align*}
which implies that
\begin{equation*}
    \TT+(\CC-\TT)_*\geq\TT+\SSS+(\CC-\TT-\SSS)_*,
\end{equation*}
so the right hand side of \eqref{eq:translation_method} becomes a worse bound from below as we add positive terms $\SSS$ to the the quasiconvex translation $\TT$. In particular, if $\TT_0$ is a weak extremal, the best value on the right hand side of \eqref{eq:translation_method} in the class of translations of the form $\TT=\TT_0+\SSS$, where $\SSS$ induces a positive quadratic form, is achieved for $\TT=\TT_0$.

\bibliographystyle{plain}
\bibliography{references}

\begin{thebibliography}{10}

\bibitem{allaire1993optimal}
Gr{\'e}goire Allaire and Robert~Vita Kohn.
\newblock Optimal bounds on the effective behavior of a mixture of two
  well-ordered elastic materials.
\newblock {\em Quarterly of applied mathematics}, 51(4):643--674, 1993.

\bibitem{allaire1994optimal}
Gr{\'e}goire Allaire and Robert~Vita Kohn.
\newblock Optimal lower bounds on the elastic energy of a composite made from
  two non-well-ordered isotropic materials.
\newblock {\em Quarterly of applied mathematics}, 52(2):311--333, 1994.

\bibitem{ball77}
John~Macleod Ball.
\newblock Convexity conditions and existence theorems in nonlinear elasticity.
\newblock {\em Archive for Rational Mechanics and Analysis}, 63(4):337--403,
  1976.

\bibitem{convex_algebraic_geometry}
Grigoriy Blekherman, Pablo~A. Parrilo, and Rekha~R. Thomas, editors.
\newblock {\em Semidefinite optimization and convex algebraic geometry},
  volume~13 of {\em MOS-SIAM Series on Optimization}.
\newblock Society for Industrial and Applied Mathematics (SIAM), Philadelphia,
  PA; Mathematical Optimization Society, Philadelphia, PA, 2013.

\bibitem{zbMATH07102076}
Grigoriy {Blekherman}, Daniel {Plaumann}, Rainer {Sinn}, and Cynthia {Vinzant}.
\newblock {Low-rank sum-of-squares representations on varieties of minimal
  degree}.
\newblock {\em {Int. Math. Res. Not.}}, 2019(1):33--54, 2019.

\bibitem{GGMzbMATH06572970}
Grigoriy {Blekherman}, Gregory~G. {Smith}, and Mauricio {Velasco}.
\newblock {Sums of squares and varieties of minimal degree}.
\newblock {\em {J. Am. Math. Soc.}}, 29(3):893--913, 2016.

\bibitem{real_alg_geom_BCR}
Jacek Bochnak, Michel Coste, Marie-Fran\c{c}oise Roy, and Marie-Fran\c{c}oise
  Roy.
\newblock {\em Real algebraic geometry}, volume~36 of {\em Ergebnisse der
  Mathematik und ihrer Grenzgebiete (3) [Results in Mathematics and Related
  Areas (3)]}.
\newblock Springer-Verlag, Berlin, 1998.
\newblock Translated from the 1987 French original, Revised by the authors.

\bibitem{BS2020}
Anita Buckley and Klemen \v{S}ivic.
\newblock New examples of extremal positive linear maps.
\newblock {\em Linear Algebra Appl.}, 598:110--144, 2020.

\bibitem{calderon}
Alberto~Pedro Calder{\'o}n.
\newblock A note on biquadratic forms.
\newblock {\em Linear Algebra and its Applications}, 7(2):175--177, 1973.

\bibitem{CasasAlverozbMATH01497487}
Eduardo {Casas-Alvero}.
\newblock {\em {Singularities of plane curves}}, volume 276.
\newblock Cambridge: Cambridge University Press, 2000.

\bibitem{Choi}
Man-Duen Choi.
\newblock Positive semidefinite biquadratic forms.
\newblock {\em Linear Algebra and its applications}, 12(2):95--100, 1975.

\bibitem{choilamzbMATH03540962}
Man-Duen {Choi} and Tsit-Yuen {Lam}.
\newblock {Extremal positive semidefinite forms}.
\newblock {\em {Math. Ann.}}, 231:1--18, 1977.

\bibitem{clrzbMATH03645183}
Man-Duen {Choi}, Tsit-Yuen {Lam}, and Bruce {Reznick}.
\newblock {Real zeros for positive semidefinite forms. I}.
\newblock {\em {Math. Z.}}, 171:1--26, 1980.

\bibitem{Dacorogna}
Bernard Dacorogna.
\newblock {\em Direct methods in the calculus of variations}, volume~78.
\newblock Springer Science \& Business Media, 2007.

\bibitem{FaracoSzekelyhidi}
Daniel Faraco and L{\'a}szl{\'o} Sz{\'e}kelyhidi.
\newblock Tartar’s conjecture and localization of the quasiconvex hull in
  $\mathbb{R}^{2\times 2}$.
\newblock {\em Acta mathematica}, 200(2):279--305, 2008.

\bibitem{francfort1986homogenization}
Gilles~A Francfort and Fran{\c{c}}ois Murat.
\newblock Homogenization and optimal bounds in linear elasticity.
\newblock {\em Archive for Rational mechanics and Analysis}, 94(4):307--334,
  1986.

\bibitem{GrabovskyBook}
Yury Grabovsky.
\newblock {\em Composite materials. Mathematical theory and exact relations}.
\newblock IOP Expanding Physics. IOP Publishing, Bristol, UK, 2016.

\bibitem{Grabovski}
Yury Grabovsky.
\newblock From microstructure-independent formulas for composite materials to
  rank-one convex, non-quasiconvex functions.
\newblock {\em Archive for Rational Mechanics and Analysis}, 227(2):607--636,
  2018.

\bibitem{HarriszbMATH00052497}
Joe {Harris}.
\newblock {\em {Algebraic geometry. A first course}}, volume 133.
\newblock Berlin etc.: Springer-Verlag, 1992.

\bibitem{HKL}
Terence L.~J. Harris, Bernd Kirchheim, and Chun-Chi Lin.
\newblock Two-by-two upper triangular matrices and morrey’s conjecture.
\newblock {\em Calculus of Variations and Partial Differential Equations},
  57(3):1--12, 2018.

\bibitem{Hartshorne}
Robin {Hartshorne}.
\newblock {\em {Algebraic geometry. Corr. 3rd printing}}, volume~52.
\newblock Springer, Cham, 1983.

\bibitem{harutyunyan2021extreme}
Davit Harutyunyan and Narek Hovsepyan.
\newblock On the extreme rays of the cone of $3\times 3$ quasiconvex quadratic
  forms: Extremal determinats versus extremal and polyconvex forms.
\newblock {\em Archive for Rational Mechanics and Analysis}, 244:1--25, 2022.

\bibitem{HMcalcvar}
Davit Harutyunyan and Graeme~Walter Milton.
\newblock Explicit examples of extremal quasiconvex quadratic forms that are
  not polyconvex.
\newblock {\em Calculus of Variations and Partial Differential Equations},
  54(2):1575--1589, 2015.

\bibitem{HMarma}
Davit Harutyunyan and Graeme~Walter Milton.
\newblock On the relation between extremal elasticity tensors with orthotropic
  symmetry and extremal polynomials.
\newblock {\em Archive for Rational Mechanics and Analysis}, 223(1):199--212,
  2017.

\bibitem{HarutyunyanMiltonCPAM}
Davit Harutyunyan and Graeme~Walter Milton.
\newblock Towards characterization of all 3$\times$ 3 extremal quasiconvex
  quadratic forms.
\newblock {\em Communications on Pure and Applied Mathematics},
  70(11):2164--2190, 2017.

\bibitem{hashin1962variational}
Zvi Hashin and S~Shtrikman.
\newblock A variational approach to the theory of the elastic behaviour of
  polycrystals.
\newblock {\em Journal of the Mechanics and Physics of Solids}, 10(4):343--352,
  1962.

\bibitem{hashin1963variational}
Zvi Hashin and Shmuel Shtrikman.
\newblock A variational approach to the theory of the elastic behaviour of
  multiphase materials.
\newblock {\em Journal of the Mechanics and Physics of Solids}, 11(2):127--140,
  1963.

\bibitem{hill1952elastic}
Richard Hill.
\newblock The elastic behaviour of a crystalline aggregate.
\newblock {\em Proceedings of the Physical Society. Section A}, 65(5):349,
  1952.

\bibitem{Kohn}
Robert~Vita Kohn.
\newblock The relaxation of a double-well energy.
\newblock {\em Continuum Mechanics and Thermodynamics}, 3(3):193--236, 1991.

\bibitem{KS2018}
Aaron Kunert and Claus Scheiderer.
\newblock Extreme positive ternary sextics.
\newblock {\em Trans. Amer. Math. Soc.}, 370(6):3997--4013, 2018.

\bibitem{lurie1984exact}
Konstantin~A. Lurie and Andrej~V. Cherkaev.
\newblock Exact estimates of conductivity of composites formed by two
  isotropically conducting media taken in prescribed proportion.
\newblock {\em Proceedings of the Royal Society of Edinburgh Section A:
  Mathematics}, 99(1-2):71--87, 1984.

\bibitem{MiltonCPAM}
Graeme~W Milton.
\newblock On characterizing the set of possible effective tensors of
  composites: the variational method and the translation method.
\newblock {\em Communications on Pure and Applied Mathematics}, 43(1):63--125,
  1990.

\bibitem{MiltonBook}
Graeme~W Milton.
\newblock {\em The Theory of Composites}, volume~6 of {\em Cambridge monographs
  on applied and computational mathematics}.
\newblock Cambridge University Press, Cambridge, UK, 2002.

\bibitem{MiltonKM}
Graeme~W Milton.
\newblock Sharp inequalities that generalize the divergence theorem: an
  extension of the notion of quasi-convexity.
\newblock {\em Proceedings of the Royal Society A: Mathematical, Physical and
  Engineering Sciences}, 469(2157):20130075, 2013.

\bibitem{morrey52}
Charles~Bradfield Morrey~Jr.
\newblock Quasi-convexity and the lower semicontinuity of multiple integrals.
\newblock {\em Pacific journal of mathematics}, 2(1):25--53, 1952.

\bibitem{MuratTartar}
Fran{\c{c}}ois Murat and Luc Tartar.
\newblock Calcul des variations et homog{\'e}n{\'\i}sation.
\newblock {\em Les m{\'e}thodes de l’homog{\'e}n{\'e}isation: th{\'e}orie et
  applications en physique}, 57:319--369, 1985.

\bibitem{Quarez2015}
Ronan Quarez.
\newblock On the real zeros of positive semidefinite biquadratic forms.
\newblock {\em Comm. Algebra}, 43(3):1317--1353, 2015.

\bibitem{Reuss}
Andr{\'a}s Reu{\ss}.
\newblock Calculation of the yield point of mixed crystals based on the
  plasticity condition for single crystals.
\newblock {\em ZAMM-Journal of Applied Mathematics and Mechanics}, 9(1):49--58,
  1929.

\bibitem{serre1}
Denis Serre.
\newblock Condition de legendre-hadamard: Espaces de matrices de rang$\ne$ 1.
\newblock {\em CR Acad. Sci. Paris S{\'e}r. I Math}, 293(1):23--26, 1981.

\bibitem{serre2}
Denis Serre.
\newblock Formes quadratiques et calcul des variations.
\newblock {\em J. Math. Pures Appl}, 62(9):177--196, 1983.

\bibitem{Sverak}
Vladim{\'\i}r {\v{S}}ver{\'a}k.
\newblock Rank-one convexity does not imply quasiconvexity.
\newblock {\em Proceedings of the Royal Society of Edinburgh Section A:
  Mathematics}, 120(1-2):185--189, 1992.

\bibitem{tartar1979}
Luc Tartar.
\newblock Estimation de coefficients homogenises.
\newblock In {\em Computing methods in applied sciences and engineering, 1977,
  I}, pages 364--373. Springer, 1979.

\bibitem{tartar1985}
Luc Tartar.
\newblock Estimation fines de coefficients homog{\'e}n{\'e}is{\'e}s, in
  “ennio de giorgi’s colloquium,”(p. kree, ed.) pitman research notes in
  math, 1985.

\bibitem{terpstra}
Fedde~J Terpstra.
\newblock Die {D}arstellung biquadratischer {F}ormen als {S}ummen von
  {Q}uadraten mit {A}nwendung auf die {V}ariationsrechnung.
\newblock {\em Mathematische Annalen}, 116(1):166--180, 1939.

\bibitem{vanhovezbMATH03046021}
L\'eon {van Hove}.
\newblock {Sur l'extension de la condition de Legendre du calcul des variations
  aux int\'egrales multiples \`a plusieurs fonctions inconnues}.
\newblock {\em {Proc. Akad. Wet. Amsterdam}}, 50:18--23, 1947.

\bibitem{Voigt}
Woldemar Voigt.
\newblock On the relationship between the two constants of elasticity of
  isotropic bodies.
\newblock {\em Annals of Physics}, 274(12):573--587, 1889.

\end{thebibliography}

\end{document}